 \newtheorem{theorem}{Theorem}[section]
 \newtheorem{corollary}[theorem]{Corollary}
 \newtheorem{lemma}[theorem]{Lemma}
 \theoremstyle{definition}
 \newtheorem{definition}[theorem]{Definition}
 \theoremstyle{remark}
 \newtheorem{remark}[theorem]{Remark}
 \newtheorem*{example}{Example}
 \numberwithin{equation}{section}
\newcommand{\toto}{\rightrightarrows}
\newcommand{\R}{{\mathbb R}}
\newcommand{\N}{{\mathbb N}}
\newcommand{\ds}{\displaystyle}
\newcommand{\To}{\longrightarrow}
\def\1{\^{\i}}
\def\2{\u{a}}
\def\3{\c{s}}
\def\4{\^{a}}
\def\5{\c{t}}
\def\a{\alpha}
\def\e{\epsilon}
\def\l{\lambda}
\def\<{\langle}
\def\>{\rangle}
\DeclareMathOperator*\cl{cl}
\DeclareMathOperator*\co{co}
\DeclareMathOperator*\inte{int}
\begin{document}
\title{A Primal-Dual Approach of Weak Vector  Equilibrium Problems}

%\subtitle{Do you have a subtitle?\\ If so, write it here}

%\titlerunning{A Primal-Dual Approach of Weak Vector  Equilibrium Problems}        % if too long for running head

\author{Szil\'ard L\' aszl\' o}
%\authorrunning{Szil\'ard L\' aszl\' o} % if too long for running head

\address{Department of Mathematics\\ Technical University of Cluj-Napoca\\
              Str. Memorandumului nr. 28, 400114 Cluj-Napoca, Romania.}
              \email{laszlosziszi@yahoo.com}

\subjclass{47H05, 47J20, 26B25, 90C33}

\keywords{vector equilibrium problem, primal-dual equilibrium problem, perturbed equilibrium problem}

\begin{abstract}
In this paper we provide some new sufficient conditions that ensure the existence of the solution of a weak vector equilibrium problem in Hausdorff topological vector spaces ordered by a cone.  Further, we introduce a dual problem and we provide conditions that assure the solution set of the original problem and its dual coincide. We show that many known problems from the literature can be treated in our primal-dual model. We provide several coercivity conditions in order to obtain solution existence of the primal-dual problems without compactness assumption. We pay a special attention to the case when the base space is a reflexive Banach space.  We apply the results obtained to perturbed vector equilibrium problems.
\end{abstract}

\maketitle

\section{Introduction}

Equilibrium problems  provide a unified framework for treating optimization problems, fixed points, saddle points or variational inequalities as well as many important problems in physics and mathematical economics, such as location problems or Nash equilibria in game theory. The foundation of scalar equilibrium theory has been laid down by Ky Fan \cite{Fan1}, his minimax inequality still being considered one of the most notable results in this field.
 The classical scalar equilibrium problem  \cite{BO,Fan1}, described by a bifunction %
$\varphi :K\times K \longrightarrow {\mathbb{R}}$, consists in finding $x_0\in K$ such
that
\begin{equation*}
\varphi(x_0,y)\ge 0,\,\forall y\in K.
\end{equation*}

Starting with the pioneering work of Giannessi \cite{G1}, several extensions of the scalar equilibrium problem  to the vector case have been considered. These vector equilibrium problems, much like their scalar counterpart, offer a unified framework for treating vector optimization, vector variational inequalities or cone saddle point problems, to name just a few \cite{A,AKY,AKY1,AOS,Go1,G,GRTZ,La}.

Let $X$ and  $Z$ be  Hausdorff topological vector spaces, let $K\subseteq X$ be a nonempty set and let $C\subseteq Z$ be a  convex and pointed cone. Assume that the interior of the cone $C$, denoted by $\inte C$, is nonempty and consider the mapping $F:K\times K\times K\To Z.$  The weak vector equilibrium problem governed by the vector trifunction $F$  consists in finding $x_0\in K$, such that
\begin{equation}\label{p1}
F(x_0,y,x_0)\not\in-\inte C,\,\forall y\in K.
\end{equation}

The dual vector equilibrium problem of (\ref{p1}) is defined as:
 Find $x_0\in K$, such that
\begin{equation}\label{p2}
F(x_0,y,y)\not\in-\inte C,\,\forall y\in K.
\end{equation}
%Recall that this problem is called weak vector equilibrium problem \cite{G,G1}.

It can easily be observed, that for $Z=\R$ and $C=\R_+=[0,\infty),$ the previous problems reduce to the scalar equilibrium  problems studied by Inoan and  Kolumb\'an in \cite{KI}.

The study of the problems (\ref{p1}) and (\ref{p2}) is motivated by the following setting.
Assume that the weak vector equilibrium problem, which consists in finding $x_0\in K$ such that $f(x_0,y)\not\in-\inte C,$ has no solution though the diagonal condition $f(x,x)=0,\,\forall x\in K$ holds. Then, we may study instead a  perturbed equilibrium problem (see also \cite{durea,FQ}) and provide assumptions on the perturbation function $g$, such that the problem which consists in finding $x_0\in K$ such that $f(x_0,y)+g(x_0,y)\not\in-\inte C,$  has a solution.
But in this case the latter problem is the dual of the following problem: Find $x_0\in K$, such that $F(x_0,y,x_0)\not\in-\inte C,$ for all $y\in K,$ with the trifunction $F(x,y,z)=f(x,z)+g(x,y).$ Moreover, for an appropriate perturbation $g$ the primal problem, that is, find $x_0\in K$ such that $g(x_0,y)\not\in-\inte C$ has a solution. Hence, it is  worthwhile to obtain conditions that assure the the solution sets of (\ref{p1}) and (\ref{p2}) coincide. This setting may have some important consequences. Indeed, by taking $X$ a Banach space and $g(x,y)=\e\|x-y\|e$, where $\e>0$ and $e\in C\setminus\{0\},$ a solution of the perturbed vector equilibrium problem is called $\e$-equilibrium point, see \cite{BKP,BKP1}. Further, special cases of the perturbed vector equilibrium problems lead to some deep results such as Deville-Godefroy-Zizler perturbed equilibrium principle or Ekeland vector variational principle, see \cite{FQ}.

Moreover, take $F(x,y,z)=\<Az,y-x\>,$ where $A:K\To L(X,Z)$ is a given operator and $L(X,Z)$ denotes the set of all linear and continuous operators
from $X$ to $Z$.  For $x^*\in L(X,Z)$ and $x\in X$, we denote by $\<x^*,x\>$ the vector $x^*(x)\in Z.$ In this setting (\ref{p1}) becomes:  find $x_0\in K$, such that $\<Ax_0,y-x_0\>\not\in-\inte C$ for all $y\in K,$ which is the weak vector variational inequality of Stampacchia, see \cite{Fe}.
On the other hand (\ref{p2}) becomes:  find $x_0\in K$, such that $\<Ay,y-x_0\>\not\in-\inte C$ for all $y\in K,$ which is the weak vector variational inequality of Minty, see \cite{Fe}.

In this paper, we obtain some existence results of the solution for the vector equilibrium problem (\ref{p1}) and (\ref{p2}). Some of our conditions are new in the literature. Several examples and counterexamples circumscribe our research and show that our conditions are essential.
The paper is organized as follows. In the next section, we introduce some preliminary notions and the necessary apparatus that we need in order to obtain our results. In section 3 and section 4 we state our results concerning on weak vector equilibrium problems. Our conditions, which ensure the solution existence of the above mentioned vector equilibrium problems are  considerably weakening the existing conditions from the literature.  We pay a special attention to the case when the set $K$ is a closed subset of a reflexive Banach  space.  Finally, we apply our results to vector equilibrium problems given by the sum of two bifunctions which  can be seen as perturbed equilibrium problems.

\section{Preliminaries}

Let $X$ be a real Hausdorff topological vector space.  For a non-empty set $D\subseteq X$, we denote by $\inte D$ its interior, by $\cl D$ its closure and by $\co D$ its convex hull.   Recall that a set $C\subseteq X$ is a cone, iff $tk\in C$ for all $c\in C$   and $t\ge 0.$ The cone $C$ is convex if $C+C=C,$ and pointed if $C\cap (-C)=\{0\}.$ Note that a closed, convex and pointed cone $C$ induce a partial ordering on $Z$, that is $z_1\le z_2\Leftrightarrow z_2-z_1\in C.$ In the sequel when we use $\inte C,$  we tacitly assume that the cone $C$ has nonempty interior. Following the same logical approach, one can introduce the strict inequality  $z_1< z_2\Leftrightarrow z_2-z_1\in \inte C,$ or $z_1< z_2\Leftrightarrow z_2-z_1\in  C\setminus\{0\}.$ These relations lead to  saying, that $z_1\not< z_2\Leftrightarrow z_2-z_1\not\in-\inte C$, or $z_1\not< z_2\Leftrightarrow z_2-z_1\not\in-C\setminus\{0\}.$ It is an easy exercise to show that   $\inte C+C= \inte C.$

Let   $Z$ be  another  Hausdorff topological vector space, let $K\subseteq X$ be a nonempty set and let $C\subseteq Z$ be a  convex and pointed cone.

A map $f:K\subseteq X\To Z$ is said to be C-upper semicontinuous at $x\in K$ iff for any neighborhood $V$ of $f(x)$ there exists a neighborhood $U$ of $x$ such that $f(u)\in  V-C$ for all $u\in U\cap K$. Obviously, if $f$ is continuous at $x\in K,$ then it is  also C-upper semicontinuous at $x\in K$.
Assume that $C$ has nonempty interior.  According to \cite{Ta} $f$ is C-upper semicontinuous at $x\in K,$ if and only if, for any $k\in\inte C$, there exists a neighborhood $U$ of $x$ such that
$$f(u) \in f(x) + k -\inte C\mbox{ for all }u \in U\cap K.$$

The map $f:K\To Z$ is said to be C-lower semicontinuous at $x\in K$ iff the map $-f$ is C-upper semicontinuous at $x\in K.$

\begin{definition} Let $K\subseteq X$ be convex. The function $f : K\to Z$ is called $C$-convex on $K$, iff  for all $x,y\in K$ and $t\in[0,1]$ one has
$$tf(x)+(1-t)f(y)-f(tx+(1-t)y)\in C.$$
%$f$ is said to be $C$-concave on $K$, iff $-f$ is $C$-convex on $K$.
\end{definition}

Note that the function $f : K\to Z$ is  $C$-convex, iff  for all $x_1,x_2,\ldots, x_n\in K$, $n\in \N$ and $\lambda_i\ge 0,\, i\in\{1,2,\ldots,n\},$ with
$\sum_{i=1}^n \lambda_i=1,$  one
has
$$ \sum_{i=1}^n \l_i f(x_i)-f\left(\sum_{i=1}^n \l_ix_i\right) \in C.$$
We will  use the following notations for the open, respectively
closed, line segments in $X$ with the endpoints $x$ and $y$
\begin{eqnarray*}
]x,y[ &:=&\big\{z\in X:z=x+t(y-x),\,t\in ]0,1[\big\}, \\
\lbrack x,y] &:=&\big\{z\in X:z=x+t(y-x),\,t\in \lbrack 0,1]\big\}.
\end{eqnarray*}
The line segments $]x,y],$ respectively $[x,y[$ are defined similarly. Further, we need the following notions see \cite{G1}.
 \begin{definition}  Let $X$ and $Z$ be  Hausdorff  topological vector spaces,  let $C\subseteq Z$ be a convex and pointed cone with nonempty interior and let $K$ be a nonempty subset of $X$. Consider the mapping $F :K \times K\times K \longrightarrow{Z}$. We say that $F$ is weakly C-pseudomonotone with respect to the third variable, if for all $x,y\in K$
  $$F(x,y,x)\not\in-\inte C\implies F(x,y,y)\not\in-\inte C.$$
\end{definition}

\begin{definition} Let $X$ and $Z$ be  Hausdorff  topological vector spaces,  let $C\subseteq Z$ be a convex and pointed cone with nonempty interior and let $K$ be a nonempty, convex subset of $X$. Consider the mapping $F :K \times K\times K \longrightarrow{Z}$. We say that $F$ is weakly explicitly C-quasiconvex with respect to the second variable, if for all $x,y,z\in K$ and for all $t\in]0,1[$ one has
$$F(x,(1-t)x+ty,z)-F(x,x,z)\in-\inte C,$$
or
$$F(x,(1-t)x+ty,z)-F(x,y,z)\in-\inte C.$$
\end{definition}

\begin{definition}  Let $X$ and $Z$ be  Hausdorff  topological vector spaces,  let $C\subseteq Z$ be a convex and pointed cone with nonempty interior and let $K$ be a nonempty, convex subset of $X$. Consider the mapping $F :K \times K\times K \longrightarrow{Z}$. We say that $F$ is weakly C-hemicontinuous with respect to the third variable, if for  all $x,y\in K$ such that  $F(x,y,(1-t)x+ty)\not\in-\inte C$ for all $t\in ]0,1]$ one has
 $$F(x,y,x)\not\in-\inte C.$$
\end{definition}
In subsequent section, the notion of a KKM map and the well-known intersection Lemma due to Ky Fan %
\cite{Fan} will be needed.

\begin{definition}
(Knaster-Kuratowski-Mazurkiewicz) Let $X$ be a Hausdorff topological vector
space and let $M\subseteq X.$ The application $G:M\rightrightarrows X$ is
called a KKM application if for every finite number of elements $%
x_1,x_2,\dots,x_n\in M$ one has $$\co\{x_1,x_2,\ldots,x_n\}\subseteq \bigcup_{i=1}^n G(x_i).$$
\end{definition}

\begin{lemma}[Fan \cite{Fan}]\label{fan}
Let $X$ be a Hausdorff topological vector space, $M\subseteq X$ and $%
G:M\rightrightarrows X$ be a KKM application. If $G(x)$ is closed for every $%
x\in M$, and there exists $x_{0}\in M,$ such that $G(x_{0})$ is compact,
then $\bigcap_{x\in M}G(x)\neq \emptyset.$
\end{lemma}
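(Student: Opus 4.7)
The proof splits naturally into two parts. First I would reduce the conclusion to a finite intersection statement. Since $G(x_0)$ is compact and each $G(x)$ is closed in $X$, every set $G(x_0)\cap G(x)$ is a closed subset of the compact space $G(x_0)$. By the standard characterization of compactness via the finite intersection property, one has $\bigcap_{x\in M}G(x)=\bigcap_{x\in M}\bigl(G(x_0)\cap G(x)\bigr)\neq\emptyset$ provided every \emph{finite} subfamily has nonempty intersection. Thus the compactness hypothesis converts the infinitary problem into a statement about finitely many sets.

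Second, to establish the finite intersection property, fix $x_1,\dots,x_n\in M$, set $\Delta:=\co\{x_1,\dots,x_n\}$, and view $\Delta$ inside the finite-dimensional subspace $\aff\{x_1,\dots,x_n\}$, which carries a unique Hausdorff vector topology coinciding with the one induced from $X$. Define $F_i:=G(x_i)\cap\Delta$, a closed subset of $\Delta$. The KKM assumption, together with the inclusion $\co\{x_{i_1},\ldots,x_{i_k}\}\subseteq\Delta$, gives
\[
\co\{x_{i_1},\ldots,x_{i_k}\}\subseteq \bigcup_{j=1}^{k}F_{i_j}\quad\text{for every }\{i_1,\dots,i_k\}\subseteq\{1,\dots,n\}.
\]
I would then invoke the classical finite-dimensional KKM theorem to obtain $\bigcap_{i=1}^{n}F_i\neq\emptyset$, and a fortiori $\bigcap_{i=1}^{n}G(x_i)\neq\emptyset$.

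The main obstacle is the finite-dimensional KKM theorem itself, which is the combinatorial core of the argument and is equivalent to Brouwer's fixed point theorem. I would either cite it as classical or prove it via Sperner's lemma: barycentrically subdivide $\Delta$ repeatedly, label each vertex $v$ of the subdivision by some index $i$ with $v\in F_i$ (the KKM covering condition ensures such a labelling is admissible on the boundary faces), apply Sperner's lemma at each refinement to extract a completely labelled subsimplex, and pass to a convergent subnet of the centroids of these subsimplices; closedness of each $F_i$ then forces the limit into $\bigcap_{i=1}^{n}F_i$. Once this finite-dimensional statement is in hand, the rest of the proof is a transparent compactness argument and no further hypothesis on $G$ beyond what is stated is required.
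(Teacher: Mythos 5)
The paper does not prove this lemma at all: it is quoted verbatim as Ky Fan's classical intersection theorem from \cite{Fan} and used as a black box, so there is no internal argument to compare yours against. Your proposal is the standard textbook proof of that theorem and its overall architecture is sound: the reduction of $\bigcap_{x\in M}G(x)\neq\emptyset$ to the finite intersection property of the closed sets $G(x_0)\cap G(x)$ inside the compact set $G(x_0)$ is correct (note $\bigcap_{x\in M}G(x)=\bigcap_{x\in M}\bigl(G(x_0)\cap G(x)\bigr)$ because $x_0\in M$), and the observation that $\aff\{x_1,\dots,x_n\}$ carries a unique Hausdorff vector topology is exactly the point that lets you transport closedness in $X$ to ordinary Euclidean closedness before invoking the finite-dimensional KKM theorem.

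One step needs repair: you apply Sperner's lemma directly to $\Delta=\co\{x_1,\dots,x_n\}$, but the points $x_1,\dots,x_n$ need not be affinely independent, so $\Delta$ is in general a polytope rather than an $(n-1)$-simplex and has no well-defined system of faces on which to run the labelling. The standard fix is to work on the standard simplex $\Sigma^{n-1}$ with vertices $e_1,\dots,e_n$ and pull back: the affine surjection $\sigma(\lambda)=\sum_{i=1}^n\lambda_i x_i$ is continuous, the sets $\widetilde F_i:=\sigma^{-1}(F_i)$ are closed, and the KKM hypothesis applied to $\{x_i:i\in I\}$ gives $\co\{e_i:i\in I\}\subseteq\bigcup_{i\in I}\widetilde F_i$ for every $I\subseteq\{1,\dots,n\}$. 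Sperner's lemma then yields $\bigcap_{i=1}^n\widetilde F_i\neq\emptyset$, and pushing forward by $\sigma$ gives $\bigcap_{i=1}^n G(x_i)\neq\emptyset$. With that adjustment your argument is complete and is precisely the classical proof the paper implicitly relies on.
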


\section{The coincidence of solution sets and solution existence}
In this section we provide several conditions, some of them new in the literature, that assure the existence of solution of problem (\ref{p1}) and (\ref{p2}), respectively. Further, we give conditions that assure the coincidence of the solution sets of these problems. Hence, we can deduce the solution existence of the dual problem from the nonemptyness of the solution set of the primal problem and viceversa.

In what follows we provide a Minty type result (see \cite{Fe,Mau-Rac}) for the problems (\ref{p1}) and (\ref{p2}). More precisely, we provide conditions that assure the coincidence of the solutions set of problem (\ref{p1}) and (\ref{p2}), respectively.

%We have the following result concerning on the coincidence of the solution sets of (\ref{p1}) and (\ref{p2}).

\begin{theorem}\label{t3.0}   Let $X$ and $Z$ be  Hausdorff  topological vector spaces,  let $C\subseteq Z$ be a convex and pointed cone with nonempty interior and let $K$ be a nonempty subset of $X$. Consider the mapping $F :K \times K\times K \longrightarrow{Z}$. Then, the following statements hold.
\begin{itemize}
\item[(i)] If $F$ is weakly C-pseudomonotone with respect to the third variable, then every $x\in K$ which solves $(\ref{p1})$ is also a solution of $(\ref{p2})$.
 \item[(ii)] Assume that $K$ is convex and $F(x,x,y)\in -C$ for all $x,y\in K,\,x\neq y$. If $F$  is weakly explicitly C-quasiconvex with respect to the second variable and is weakly C-hemicontinuous with respect to the third variable, then every $x\in K$ which solves $(\ref{p2})$ is also a solution of $(\ref{p1}).$
     \end{itemize}
\end{theorem}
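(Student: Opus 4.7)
For Part (i), the plan is essentially a one-line application of the definition. Weak $C$-pseudomonotonicity in the third variable states exactly that $F(x,y,x)\not\in-\inte C$ implies $F(x,y,y)\not\in-\inte C$ for all $x,y\in K$. So a solution $x_0$ of (\ref{p1}) satisfies the hypothesis (with $x=x_0$) for every $y\in K$, and the conclusion is precisely that $x_0$ solves (\ref{p2}).

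For Part (ii), let $x_0\in K$ solve (\ref{p2}) and fix an arbitrary $y\in K$; the goal is $F(x_0,y,x_0)\not\in-\inte C$. The case $y=x_0$ is immediate by taking $y=x_0$ in (\ref{p2}). If $y\neq x_0$, I would set $y_t:=(1-t)x_0+ty\in K$ for $t\in\,]0,1]$ and note $y_t\neq x_0$. Weak $C$-hemicontinuity in the third variable (applied to the pair $(x_0,y)$) then reduces the problem to showing that $F(x_0,y,y_t)\not\in-\inte C$ for every such $t$.

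To establish this, I would apply weak explicit $C$-quasiconvexity in the second variable to the triple $(x_0,y,y_t)$ with the same parameter $t$: since $(1-t)x_0+ty=y_t$, one of the following must hold,
\begin{equation*}
F(x_0,y_t,y_t)-F(x_0,x_0,y_t)\in-\inte C\quad\text{or}\quad F(x_0,y_t,y_t)-F(x_0,y,y_t)\in-\inte C.
\end{equation*}
In the first case, the diagonal hypothesis $F(x_0,x_0,y_t)\in-C$ (valid since $y_t\neq x_0$), combined with the identity $\inte C+C=\inte C$ noted in the Preliminaries, yields $F(x_0,y_t,y_t)\in-C-\inte C\subseteq-\inte C$, contradicting that $x_0$ solves (\ref{p2}) at the point $y_t$. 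So the second alternative must hold. If $F(x_0,y,y_t)$ were itself in $-\inte C$, then a parallel cone computation gives $F(x_0,y_t,y_t)\in-\inte C-\inte C\subseteq -\inte C$, contradicting (\ref{p2}) at $y_t$ again. Hence $F(x_0,y,y_t)\not\in-\inte C$ for every $t\in\,]0,1]$, and hemicontinuity finishes the argument.

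The main obstacle I anticipate is organising the case split produced by weak explicit $C$-quasiconvexity: the substitution $z=y_t$ must be chosen precisely so that both branches loop back to the dual problem evaluated at the intermediate point $y_t$ and can then be excluded using the stability property $\inte C+C=\inte C$. Once the reduction to the open segment $\,]x_0,y]$ via hemicontinuity is in place, the remainder amounts to straightforward cone bookkeeping.
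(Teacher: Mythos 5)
Your proposal is correct and follows essentially the same route as the paper: part (i) is the same direct unwinding of the definition, and in part (ii) you use the identical segment $y_t=(1-t)x_0+ty$, eliminate the first branch of weak explicit $C$-quasiconvexity via the diagonal condition and $\inte C+C=\inte C$, deduce $F(x_0,y,y_t)\not\in-\inte C$ on $]0,1]$ (the endpoint $t=1$ coming directly from (\ref{p2}), as in the paper), and conclude by weak $C$-hemicontinuity. No gaps worth noting.
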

\begin{proof} $"(i)"$ Let $x_0\in K$ be a solution of (\ref{p1}). Then $F(x_0,y,x_0)\not\in-\inte C$ for all $y\in K.$ On the other hand $F$ is  weakly C-pseudomonotone with respect to the third variable, hence $F(x_0,y,x_0)\not\in-\inte C$ implies $F(x_0,y,y)\not\in-\inte C$ for all $y\in K$.

$"(ii)"$ Let $x_0\in K$ be a solution of (\ref{p2}). Then $F(x_0,y,y)\not\in-\inte C$ for all $y\in K.$ Let $z\in K,\,z\neq x_0.$ Since $K$ is convex we obtain that $(1-t)x_0+tz\in K$ for all $t\in [0,1].$

Consequently, we have $$F(x_0,(1-t)x_0+tz,(1-t)x_0+tz)\not\in-\inte C$$ for all $t\in [0,1].$

But $F$ is weakly explicitly quasiconvex relative the second variable, hence for all $t\in ]0,1[$ one has
$$F(x_0,(1-t)x_0+tz,(1-t)x_0+tz)-F(x_0,x_0,(1-t)x_0+tz)\in -\inte, C$$
or
$$F(x_0,(1-t)x_0+tz,(1-t)x_0+tz)-F(x_0,z,(1-t)x_0+tz)\in -\inte C.$$

Since $F(x,x,y)\in -C$ for all $x,y\in K,\,x\neq y$ and $C+\inte C=\inte C$ the first relation cannot hold. Hence, for all $t\in ]0,1[$ one has

$$F(x_0,(1-t)x_0+tz,(1-t)x_0+tz)-F(x_0,z,(1-t)x_0+tz)\in -\inte C.$$
Since  $F(x_0,(1-t)x_0+tz,(1-t)x_0+tz)\not\in-\inte C$, for all $t\in [0,1],$ and by assumption $F(x_0,z,z)\not\in-\inte C$, we have that  for all $t\in ]0,1],$ $$F(x_0,z,(1-t)x_0+tz)\not\in -\inte C.$$

Taking into account the fact that $F$ is weakly C-hemicontinuous with respect to the third variable,  we obtain $$F(x_0,z,x_0)\not\in-\inte C.$$  Since $z$ is arbitrary, it follows that $x_0$ is a solution of (\ref{p1}).
\end{proof}

An immediate consequence is the following.
\begin{corollary}  Let $X$ and $Z$ be  Hausdorff  topological vector spaces,  let $C\subseteq Z$ be a convex and pointed cone with nonempty interior and let $K$ be a convex nonempty subset of $X$. Consider the mapping $F :K \times K\times K \longrightarrow{Z}$. Assume that $F(x,x,y)\in -C$ for all $x,y\in K,\,x\neq y$. Assume further that $F$  is weakly explicitly C-quasiconvex with respect to the second variable, $F$ is weakly C-pseudomonotone with respect to the third variable and $F$ is weakly C-hemicontinuous with respect to the third variable. Then the solution sets of the problems $(\ref{p1})$ and $(\ref{p2})$ coincide.
\end{corollary}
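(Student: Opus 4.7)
The plan is to derive the corollary as an immediate consequence of Theorem \ref{t3.0}: the hypotheses have been assembled precisely so that both parts (i) and (ii) apply simultaneously. Denoting by $S_1$ and $S_2$ the solution sets of (\ref{p1}) and (\ref{p2}) respectively, it suffices to establish the two inclusions $S_1\subseteq S_2$ and $S_2\subseteq S_1$ separately.

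For the inclusion $S_1\subseteq S_2$, I would invoke part (i) of Theorem \ref{t3.0}. Its only hypothesis is the weak C-pseudomonotonicity of $F$ with respect to the third variable, which is explicitly assumed in the corollary. Hence every $x\in K$ solving (\ref{p1}) also solves (\ref{p2}).

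For the reverse inclusion $S_2\subseteq S_1$, I would invoke part (ii) of Theorem \ref{t3.0}. This requires four things: the convexity (and nonemptiness) of $K$, the diagonal condition $F(x,x,y)\in -C$ for all $x,y\in K$ with $x\neq y$, weak explicit C-quasiconvexity of $F$ with respect to the second variable, and weak C-hemicontinuity of $F$ with respect to the third variable. Each of these is listed among the hypotheses of the corollary, so part (ii) applies verbatim and yields that every solution of (\ref{p2}) is also a solution of (\ref{p1}).

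Combining the two inclusions gives $S_1=S_2$, which is the claim. There is no real obstacle here beyond bookkeeping: the corollary is simply the conjunction of the two implications already proved in Theorem \ref{t3.0}, and one just has to notice that all the hypotheses required by (i) and (ii) are present at once.
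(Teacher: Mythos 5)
Your proposal is correct and matches the paper exactly: the paper presents this corollary as an immediate consequence of Theorem \ref{t3.0}, obtained by applying part (i) for the inclusion of the solution set of (\ref{p1}) into that of (\ref{p2}) and part (ii) for the reverse inclusion, just as you do.
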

\begin{remark}\rm
Note that the assumptions $F(x,x,y)\in -C$ for all $x,y\in K,\,x\neq y$ and $F$  is weakly explicitly C-quasiconvex with respect to the second variable in the hypothesis (ii) of Theorem \ref{t3.0} can be replaced by the assumption
$$F(x,(1-t)x+ty,z)-F(x,y,z)\in-\inte C,$$
for all $t\in]0,1[,$ as follows directly from the proof.

However, in what follows we show that the latter assumption %that $F$ is weakly explicitly C-quasiconvex with respect to the second variable in the hypothesis (ii) of Theorem \ref{t3.0}
is essential. More precisely, we give an example of a trifunction $F$  which is not weakly explicitly C-quasiconvex with respect to the second variable but all the other assumptions of Theorem \ref{t3.0} hold, meanwhile the problem (\ref{p2}) has a solution, but the problem (\ref{p1}) has no  solution.
\end{remark}
\begin{example}\rm [see also \cite{L1}, Example 3.2] \label{ex3.1} Let us consider the trifunction $F:[-1,1]\times [-1,1]\times [-1,1]\To \R^2,$
 $$F(x,y,z)=\left((f(y)-f(x))g(z),(f(y)-f(x))g(z)\right),$$
 where  $f:[-1,1]\To[0,1],$  $f(x)=\left\{
\begin{array}{lll}
-2x-1,\,\mbox{if\,}\, x\in\left[-1,-\ds\frac12\right]\\
2x+1,\,\mbox{if\,}\, x\in\left(-\ds\frac12,0\right]\\
-2x+1,\,\mbox{if\,}\,x\in\left(0,1\right],\\
\end{array}
\right.$
and

$g:[-1,1]\To [-1,1],$
$ g(x)=\left\{
\begin{array}{ll}
-\ds\frac23 x+\frac13,\,\mbox{if,}\, x\in\left[-1,\ds\frac12\right]\\
-2x+1,\,\mbox{if,}\, x\in\left(\ds\frac12,1\right].\\
\end{array}
\right.$
\end{example}
Further, consider $C=\R_+^2=\{(x_1,x_2)\in\R^2:x_1\ge 0,\,x_2\ge0\}$ the nonnegative orthant of $\R^2$, which is obviously a convex and pointed cone, with nonempty interior. We  consider the problem (\ref{p1}) and (\ref{p2}) defined by the trifunction $F$ and by the cone $C.$
Obviously the set $K=[-1,1]$ is convex. Further $F(x,x,y)=(0,0)\in -C$ for all $x,y\in K$ and since the functions $f$ and $g$ are continuous, from  $F(x,y,(1-t)x+ty)\not\in-\inte C$ for all $t\in ]0,1]$ one has
 $F(x,y,x)\not\in-\inte C,$ by taking the limit $t\To 0.$ Hence, $F$ is weakly C-hemicontinuous with respect to the third variable.
 We show that $F$ is not weakly explicitly C-quasiconvex with respect to the second variable. Indeed, for $x=-1,\,y=-\frac12,\,z=\frac12$ and all $t\in]0,1[$ one has $F(x,(1-t)x+ty,z)-F(x,x,z)=(0,0)\not\in-\inte C$ and
$F(x,(1-t)x+ty,z)-F(x,y,z)=(0,0)\not\in-\inte C.$

 We show that  $x_0=-\frac12\in K$ is a solution of the (\ref{p2}), but is not a solution of (\ref{p1}).
Indeed, it can easily  be verified that $\left(f(y)-f\left(-\frac12\right)\right)\cdot g(y)\ge 0$ for all $y\in K,$ hence $F\left(-\frac12,y,y\right)\not\in-\inte C.$ In other words $x_0=-\frac12$ is a solution of (\ref{p2}).

 On the other hand, for $y=\frac34\in K$ we obtain $\left(f(y)-f\left(-\frac12\right)\right)\cdot g\left(-\frac12\right)=-\frac12\cdot\frac23<0$ which shows that $F\left(-\frac12,y,-\frac12\right)\in-\inte C.$  Hence, $x_0=-\frac12$ is not a solution of (\ref{p1}).

\begin{remark}\rm
In order to use Fan's Lemma to obtain solution existence for the problem (\ref{p1}) we need conditions that assure for every $y\in K$ the closedness of the sets $G(y)=\{x\in K: F(x,y,x)\not\in-\inte C\}.$
\end{remark}
\begin{lemma}\label{primclosed} Let $X$ and $Z$ be  Hausdorff  topological vector spaces,  let $C\subseteq Z$ be a convex and pointed cone with nonempty interior and let $K$ be a nonempty, convex and closed subset of $X$. Let $y\in K$ and consider the mapping $F :K \times K\times K \longrightarrow{Z}.$ Assume that one of the following conditions hold.
\begin{itemize}
\item[(a)] For every  net $(x_\a)\subseteq K,\, \lim x_\a=x$ one has:
$F(x_\a,y,x_\a)\not\in-\inte C\implies F(x,y,x)\not\in-\inte C.$
\item[(b)] The mapping $x\longrightarrow F(x,y,x)$ is C-upper semicontinuous on $K.$
\item[(c)] For every  $x\in K$ and for every  net $(x_\a)\subseteq K,\, \lim x_\a=x$ there exists a net $z_\a\subseteq Z,\, \lim z_\a=z$ such that $F(x_\a,y,x_\a)-z_\a\in -C$ and $F(x,y,x)-z\in C.$
\end{itemize}
Then, the set $G(y)=\{x\in K: F(x,y,x)\not\in-\inte C\}$ is closed.
\end{lemma}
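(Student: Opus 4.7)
The plan is to show that for each of the three conditions (a), (b), (c), any convergent net $(x_\alpha) \subseteq G(y)$ with $x_\alpha \to x$ has $x \in G(y)$. Closedness of $K$ guarantees $x \in K$, so it remains to verify that $F(x,y,x) \notin -\inte C$.

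Under hypothesis (a) there is nothing to do: the very statement of (a) applied to the net $(x_\alpha)$ says that $F(x_\alpha,y,x_\alpha) \notin -\inte C$ forces $F(x,y,x) \notin -\inte C$, so $x \in G(y)$.

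The main idea is therefore to reduce (b) and (c) to (a) by a contradiction argument exploiting that $-\inte C$ is open and that $\inte C + C = \inte C$ (a property that the paper has just recorded). For (b), suppose for contradiction that $F(x,y,x) \in -\inte C$. Then $V := -\inte C$ is an open neighborhood of $F(x,y,x)$, so by C-upper semicontinuity of $u \mapsto F(u,y,u)$ at $x$ there exists a neighborhood $U$ of $x$ with
\[
F(u,y,u) \in V - C = -\inte C - C = -\inte C \quad \text{for all } u \in U \cap K.
\]
Since $x_\alpha \to x$, eventually $x_\alpha \in U \cap K$, contradicting $x_\alpha \in G(y)$. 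For (c), again assume $F(x,y,x) \in -\inte C$. Pick nets $(z_\alpha)$, $z_\alpha \to z$, with $F(x_\alpha,y,x_\alpha) - z_\alpha \in -C$ and $F(x,y,x) - z \in C$. Then $z = F(x,y,x) - c$ for some $c \in C$, hence $z \in -\inte C - C = -\inte C$. Openness of $-\inte C$ yields $z_\alpha \in -\inte C$ eventually, and then
\[
F(x_\alpha,y,x_\alpha) \in z_\alpha - C \subseteq -\inte C - C = -\inte C,
\]
again a contradiction.

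There is no serious obstacle: the proof is essentially a bookkeeping exercise in the definitions together with the one algebraic identity $\inte C + C = \inte C$. The only mildly delicate step is ensuring that the C-upper semicontinuity definition from the preliminaries is applied with an open neighborhood $V$, which is fine because $-\inte C$ is itself open and contains the assumed point $F(x,y,x)$.
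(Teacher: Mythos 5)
Your proof is correct and follows essentially the same route as the paper: part (a) is immediate, and parts (b) and (c) are handled by assuming $F(x,y,x)\in-\inte C$, using openness of $-\inte C$ and the identity $\inte C+C=\inte C$ to force $F(x_\a,y,x_\a)\in-\inte C$ eventually, a contradiction. The only (harmless) difference is in (b), where you apply the neighborhood definition of C-upper semicontinuity directly with $V=-\inte C$, whereas the paper invokes Tanaka's characterization with $k=-F(x,y,x)\in\inte C$; both yield the same estimate.
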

\begin{proof} Note that $(a)$ is obvious. Let us prove (b). Consider the net $(x_\a)\subseteq G(y)$ and let $\lim x_\a=x_0.$ Assume that $x_0\not\in G(y).$ Then $F(x_0,y,x_0)\in-\inte C$.
 According to the assumption the function $x\longrightarrow F(x,y,x)$ is C-upper semicontinuous at $x_0$, hence  for every $k\in\inte C$ there exists $U,$ a neighborhood of $x_0,$ such that $F(x,y,x)\in F(x_0,y,x_0)+k-\inte C$ for all $x\in U.$  But then, for $k=-F(x_0,y,x_0)\in\inte C$,  one obtains that there exits $\a_0$ such that $F(x_\a,y,x_\a)\in -\inte C,$ for $\a\ge\a_0,$ which contradicts the fact that $(x_\a)\subseteq G(y_0)$.
Hence $G(y)\subseteq K$ is closed.

For (c) consider the net $(x_\a)\subseteq G(y)$ and let $\lim x_\a=x_0.$ Assume that $x_0\not\in G(y).$ Then $F(x_0,y,x_0)\in-\inte C$. But by the assumption there exists a net $z_\a\subseteq K,\, \lim z_\a=z$ such that $F(x_\a,y,x_\a)-z_\a\in -C$ and $F(x_0,y,x_0)-z\in C.$ From the latter relation we get $z\in-\inte C$, and since $-\inte C$ is open we have $z_\a\in -\inte C$ for every $\a\ge \a_0.$ But then, $F(x_\a,y,x_\a)\in z_\a -C$ and $\inte C+C=\inte C$ leads to $F(x_\a,y,x_\a)\in -\inte C$ for $\a\ge\a_0,$ contradiction.
\end{proof}

\begin{remark}\rm  Note that condition (c) seems to be new in the literature. In what follows we show that (b) implies (c).

Assume that for a fixed $y\in K$, the mapping $x\longrightarrow F(x,y,x)$ is C-upper semicontinuous on $K.$
    Let $x_0\in K$ and consider the  net $(x_\a)\subseteq K,\, \lim x_\a=x_0.$ We show that there exists a net $z_\a\subseteq Z,\, \lim z_\a=z$ such that $F(x_\a,y,x_\a)-z_\a\in -C$ and $F(x_0,y,x_0)-z\in C.$

    We have that for every neighbourhood of $F(x_0,y,x_0)$, say $V$, there exists $U,$ a neighbourhood of $x_0$, such that $F(x,y,x)\in V-C$ for all $x\in U.$ Obviously on can take $V$ closed, hence there exists a net $(s_\a)\subseteq V$ such that $\lim s_\a=F(x_0,y,x_0).$
 Since $\lim x_\a=x_0$  we have that $x_\a\in U$ for every $\a\ge\a_0,$ hence $F(x_\a,y,x_\a)\in V-C$ for every $\a\ge\a_0.$ This leads to
 $F(x_\a,y,x_\a)-s_\a\in -C$ for every $\a\ge\a_0.$ Hence one can take $z_\a=F(x_\a,y,x_\a)$ if $\a<\a_0$ and $z_\a=s_\a$ for $\a\ge\a_0$ and the conclusion follows.
\end{remark}

In what follows we provide an example to emphasize that the condition (c) in Lemma \ref{primclosed} is in general weaker than condition (b).
\begin{example}\rm\label{ex3.2} Let $C=\{(x_1,x_2)\in \R^2: x_1^2\le x_2^2,\,x_2\ge 0\}.$ Obviously $C$ is a closed convex and pointed cone in $\R^2$ with nonempty interior. Consider the trifunction $$F:[0,1]\times [0,1]\times [0,1]\To \R^2,F(x,y,z)=\left\{\begin{array}{ll}(x+y,2x-z)\mbox{ if } 0\le x\le\frac12,\\(2x+y,z-x)\mbox{ if }\frac12<x\le 1.\end{array} \right.$$
Then, for every fixed $y\in [0,1]$ the mapping $x\To F(x,y,x)$ is continuous, hence it is also C-upper semicontinuous, at every $x\in[0,1]\setminus\{\frac12\}.$
We show that $x\To F(x,y,x)$ is not C-upper semicontinuous at the point $x=\frac12$ for every fixed $y\in[0,1].$ For this it is enough to show that for all $\e>0$ there exists $(c_1,c_2)\in\inte C$ and $x\in \left]\frac12-\e,\frac12+\e\right[$ such that $(c_1,c_2)+F(\frac12,y,\frac12)-F(x,y,x)\not\in\inte C.$ Hence, for fixed $\e>0$ ($\e<2$) let $(c_1,c_2)=(\e-1,1)\in\inte C.$ Consider $x=\frac12+\frac{\e}{2}\in \left]\frac12-\e,\frac12+\e\right[.$ Then, $(c_1,c_2)+F(\frac12,y,\frac12)-F(x,y,x)=\left(-\frac32,\frac32\right)\not\in\inte C.$

Next, we show that condition (c) in Lemma \ref{primclosed} holds for $x=\frac12$ and every fixed $y\in[0,1].$ Obviously instead of nets one can consider sequences, hence let $(x_n)\subseteq [0,1],\,x_n\To \frac12,\,n\To\infty.$ We must show, that there exists a sequence $(z_n)\subseteq \R^2,\, \lim z_n=z$ such that $F(x_n,y,x_n)-z_n\in -C$ and $F\left(\frac12,y,\frac12\right)-z\in C.$

Let $z_n=(x_n+y,x_n).$ Then $F(x_n,y,x_n)-z_n=(0,0)\in-C$ for every $n\in\N,$ such that $x_n\le\frac12$ and $F(x_n,y,x_n)-z_n=(x_n,-x_n)\in-C$ for every $n\in\N,$ such that $x_n>\frac12.$ Obviously $\lim z_n=z=\left(\frac12+y,\frac12\right)$, hence $F\left(\frac12,y,\frac12\right)-z=(0,0)\in C.$
\end{example}

Now we are able to prove the following existence result concerning on the solution of the problem (\ref{p1}).

\begin{theorem}
\label{t11} Let $X$ and $Z$ be  Hausdorff topological vector spaces,  let $C\subseteq Z$ be a convex and pointed cone with nonempty interior, and let $K$ be a nonempty, convex and closed subset of $X$. Consider the mapping $F :K \times K\times K \longrightarrow{Z}$  satisfying
\begin{itemize}
\item[(i)] $\forall y\in K,$ one of the conditions (a), (b), (c) in Lemma \ref{primclosed} is satisfied,

\item[(ii)] $\forall x\in K,$ the mapping $y\longrightarrow F(x,y,x)$ is C-convex,

\item[(iii)] $\forall x\in K,\, F(x,x,x)\not\in-\inte  C,$

\item[(iv)] There exists $K_0\subseteq X$ a nonempty and compact set and $y_0\in K,$ such that $F(x,y_0,x)\in-\inte C,$ for all $x\in K\setminus K_0.$
\end{itemize}

Then, there exists an element $x_0\in K$ such that $F(x_0,y,x_0)\not\in -\inte{C},$ for all $y\in K.$
\end{theorem}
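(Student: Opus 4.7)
The natural approach is a direct application of Fan's KKM Lemma (Lemma \ref{fan}). I define the set-valued map $G:K\rightrightarrows K$ by
$$G(y)=\{x\in K:F(x,y,x)\not\in-\inte C\},\quad y\in K,$$
so that a solution of (\ref{p1}) is precisely an element of $\bigcap_{y\in K}G(y)$. The plan is then to verify the three hypotheses of Fan's Lemma for $G$: closedness of each $G(y)$, the KKM property, and compactness of $G(y_0)$ for some $y_0$.

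Closedness of $G(y)$ for every $y\in K$ is handed to us by hypothesis (i) together with Lemma \ref{primclosed}. For the KKM property, I take any finite family $y_1,\dots,y_n\in K$ and any convex combination $x=\sum_{i=1}^n t_iy_i\in\co\{y_1,\dots,y_n\}$, and argue by contradiction: if $x\notin G(y_i)$ for every $i$, then $F(x,y_i,x)\in-\inte C$ for all $i$. Since $C$ is convex, $\inte C$ (and hence $-\inte C$) is convex, so the convex combination $\sum_{i=1}^n t_iF(x,y_i,x)$ still lies in $-\inte C$. By hypothesis (ii) the mapping $y\mapsto F(x,y,x)$ is $C$-convex, therefore
$$\sum_{i=1}^n t_iF(x,y_i,x)-F(x,x,x)\in C,\quad\text{i.e.}\quad F(x,x,x)\in\sum_{i=1}^n t_iF(x,y_i,x)-C\subseteq -\inte C-C=-\inte C,$$
where the last identity uses $\inte C+C=\inte C$ recalled in Section 2. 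This contradicts hypothesis (iii), so $x\in\bigcup_i G(y_i)$ and $G$ is KKM.

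It remains to produce a compact $G(y_0)$. Hypothesis (iv) supplies a compact set $K_0$ and a point $y_0\in K$ such that $F(x,y_0,x)\in-\inte C$ for every $x\in K\setminus K_0$, i.e. $G(y_0)\subseteq K\cap K_0\subseteq K_0$. Since $G(y_0)$ is closed in $X$ (by the previous step) and $X$ is Hausdorff, $G(y_0)$ is a closed subset of the compact set $K_0$, hence compact. Fan's Lemma then yields $\bigcap_{y\in K}G(y)\neq\emptyset$, and any $x_0$ in this intersection satisfies $F(x_0,y,x_0)\not\in-\inte C$ for all $y\in K$.

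The main technical point is the KKM verification, which hinges on two features that must be invoked precisely: the convexity of $-\inte C$ (needed to keep the convex combination inside $-\inte C$) and the cone identity $\inte C+C=\inte C$ (needed to absorb the $-C$ coming from $C$-convexity). Everything else is a routine bookkeeping step once Lemma \ref{primclosed} is in hand.
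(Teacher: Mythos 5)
Your proposal is correct and follows essentially the same route as the paper's own proof: the same KKM map $G(y)=\{x\in K:F(x,y,x)\not\in-\inte C\}$, closedness via Lemma \ref{primclosed}, the identical contradiction argument for the KKM property using convexity of $-\inte C$ together with $\inte C+C=\inte C$, and compactness of $G(y_0)$ from $G(y_0)\subseteq K_0$ before invoking Fan's Lemma. No gaps.
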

\begin{proof}  We consider the set-valued map $G:K\toto K,\,\, G(y)=\{x\in K:F(x,y,x)\not\in-\inte C\}.$ From (i) via Lemma \ref{primclosed} we obtain that $G(y)$ is closed for all $y\in K.$ Moreover, (iii) assures that $G(y)\neq\emptyset,$ since $y\in G(y).$

We show next that $G$ is a  KKM mapping.
Assume the contrary. Then, there exists $y_1,y_2,...,y_n\in K$ and $y\in \co\{y_1,y_2,...,y_n\}$ such that $y\not\in\cup_{i=1}^n G(y_i).$ In other words,
there exists $\l_1,\l_2,...,\l_n\ge 0$ with $\sum_{i=1}^n \l_i=1$ such that $y=\sum_{i=1}^n \l_i y_i\not\in G(y_i)$ for all $i\in\{1,2,...,n\},$ that is
$F\left(\sum_{i=1}^n \l_i y_i,y_i,\sum_{i=1}^n \l_i y_i\right)\in-\inte C,$ for all $i\in\{1,2,...,n\}.$
But then, since $-\inte C$ is convex, one has
$\sum_{i=1}^n \l_i F\left(\sum_{i=1}^n \l_i y_i,y_i,\sum_{i=1}^n \l_i y_i\right)\in-\inte C.$

From assumption (ii),  we have that $$\sum_{i=1}^n \l_i F\left(\sum_{i=1}^n\l_i y_i,y_i,\sum_{i=1}^n\l_i y_i\right)- F\left(\sum_{i=1}^n\l_i y_i, \sum_{i=1}^n\l_i y_i,\sum_{i=1}^n\l_i y_i\right )\in C,$$ or equivalently,  $$F\left(\sum_{i=1}^n\l_i y_i, \sum_{i=1}^n\l_i y_i,\sum_{i=1}^n\l_i y_i\right )\in\sum_{i=1}^n \l_i F\left(\sum_{i=1}^n\l_i y_i,y_i,\sum_{i=1}^n\l_i y_i\right)- C.$$

 On the other hand, $\sum_{i=1}^n \l_i F\left(\sum_{i=1}^n\l_i y_i,y_i,\sum_{i=1}^n\l_i y_i\right)\in-\inte C$ and $\inte C+C=\inte C$, hence
$$F\left(\sum_{i=1}^n\l_i y_i, \sum_{i=1}^n\l_i y_i,\sum_{i=1}^n\l_i y_i\right )\in -\inte C,$$  which contradicts (iii). Consequently,  $G$ is a KKM application.

 We show that $G(y_0)$ is compact. For this is enough to show that $G(y_0)\subseteq K_0.$ Assume the contrary, that is $G(y_0)\not\subseteq K_0.$ Then, there exits $z\in G(y_0)\setminus K_0.$ This implies that $z\in K\setminus K_0,$ and according to (iv) $F(z,y_0,z)\in-\inte C,$ which contradicts the fact that $z\in G(y_0).$

Hence, $G(y_0)$ is a closed subset of the compact set $K_0$ which shows that $G(y_0)$ is compact.

Thus, according to Ky Fan's Lemma, $\bigcap_{y\in  K}G(y)\neq\emptyset.$ In other words, there exists $x_0\in K,$ such that $F(x_0,y,x_0)\not\in-\inte  C$ for all $y\in K.$
\end{proof}

\begin{remark}\rm   The approach, based on Ky Fan's Lemma,  in the proof of Theorem \ref{t11}, is well known in the literature, see, for instance,
\cite{ACY,AK,L1,La1,La,DTL}. Note that condition (iv) combined with condition (iii) in Theorem \ref{t11} ensure that $y_0\in K\cap K_0$, hence $K\cap K_0\neq \emptyset,$  and since  $K\cap K_0$ is compact one can assume directly that $K_0\subseteq K.$ Further, if $K$ is compact condition (iv) is automatically satisfied with $K_0=K.$
\end{remark}

In what follows, inspired from \cite{kazmi},  we provide another coercivity condition concerning a compact set and its algebraic interior.
Let $U,V\subseteq X$ be convex sets and assume that $U\subseteq V$. We recall that the algebraic interior of $U$ relative to
$V$ is defined as
$$\mbox{core}_V U=\{u\in U: U\cap ]u,v]\neq\emptyset,\,\forall v\in V \}$$ Note that $\mbox{core}_V V = V.$
Our coercivity condition concerning the problem (\ref{p1}) becomes:

  There exists a nonempty compact convex subset $K_0$ of $K$ such that for every $x\in K_0\setminus\mbox{core}_K K_0$ there exists an $y_0\in \mbox{core}_K K_0$ such that $F(x,y_0,x)\in -C.$

In the following results we use the coercivity conditions emphasized above and we  drop the  closedness condition  on $K$. However, condition (iii) also changes.

 \begin{theorem}
\label{t110} Let $X$ and $Z$ be  Hausdorff topological vector spaces,  let $C\subseteq Z$ be a convex and pointed cone with nonempty interior, and let $K$ be a nonempty, convex subset of $X$. Consider the mapping $F :K \times K\times K \longrightarrow{Z}$  satisfying

\begin{itemize}
\item[(i)]$\forall y\in K,$ one of the conditions (a), (b), (c) in Lemma \ref{primclosed} is satisfied,

\item[(ii)] $\forall x\in K,$ the mapping $y\longrightarrow F(x,y,x)$ is C-convex,

\item[(iii)] $\forall x\in K,\, F(x,x,x)\in -C\setminus -\inte C,$

\item[(iv)] There exists a nonempty compact convex subset $K_0$ of $K$ with the property that for every $x\in K_0\setminus\mbox{core}_K K_0,$ there exists an $y_0\in \mbox{core}_K K_0$ such that $F(x,y_0,x)\in -C.$
\end{itemize}

Then, there exists an element $x_0\in K$ such that $F(x_0,y,x_0)\not\in -\inte{C},$ for all $y\in K.$
\end{theorem}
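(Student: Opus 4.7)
The plan is to reduce to Theorem \ref{t11} applied on the compact convex set $K_0$ and then bootstrap the conclusion from $K_0$ to all of $K$ via the algebraic-interior coercivity condition (iv). Applying Theorem \ref{t11} with $K_0$ in the role of the underlying set is legitimate: $K_0$ is compact (hence closed) and convex, conditions (i)--(iii) of Theorem \ref{t110} restrict verbatim to $K_0$ (note that the present hypothesis (iii) is strictly stronger than hypothesis (iii) of Theorem \ref{t11}), and the coercivity assumption (iv) of Theorem \ref{t11} holds trivially by taking $K_0$ itself as the compact set. This yields some $x_0 \in K_0$ with $F(x_0, y, x_0) \notin -\inte C$ for every $y \in K_0$.

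The key subsequent step is to single out a point $p \in \mbox{core}_K K_0$ at which $F(x_0, p, x_0) \in -C$. If $x_0 \in \mbox{core}_K K_0$, set $p := x_0$; then $F(x_0, p, x_0) = F(x_0, x_0, x_0) \in -C$ by hypothesis (iii). Otherwise $x_0 \in K_0 \setminus \mbox{core}_K K_0$ and hypothesis (iv) furnishes such a $p \in \mbox{core}_K K_0$ directly. In either case one has $p \in \mbox{core}_K K_0$ and $F(x_0, p, x_0) \in -C$.

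To finish, fix an arbitrary $y \in K$. Since $p \in \mbox{core}_K K_0$, the definition of the algebraic interior relative to $K$ supplies $s \in (0,1]$ with $z := (1-s)p + sy \in K_0$, and then $F(x_0, z, x_0) \notin -\inte C$ by the first step. The C-convexity of $y' \mapsto F(x_0, y', x_0)$ gives
$$(1-s) F(x_0, p, x_0) + s F(x_0, y, x_0) - F(x_0, z, x_0) \in C.$$
Assuming $F(x_0, y, x_0) \in -\inte C$, the positive homogeneity of $\inte C$ together with $(1-s) F(x_0, p, x_0) \in -C$ and the identity $\inte C + C = \inte C$ place the first two summands in $-\inte C$, forcing $F(x_0, z, x_0) \in -\inte C$, a contradiction. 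The main obstacle is precisely this final transition from $K_0$ to $K$: the whole scheme hinges on manufacturing the core point $p$ in both cases above so that a single convex-combination argument along the segment $[p, y]$ can be applied uniformly in $y \in K$, compensating for the absence of closedness of $K$ and the fact that $x_0$ need not itself lie in $\mbox{core}_K K_0$.
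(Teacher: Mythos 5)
Your proposal is correct and follows essentially the same route as the paper: apply Theorem \ref{t11} on the compact convex set $K_0$, produce a point $z_0\in\mbox{core}_K K_0$ with $F(x_0,z_0,x_0)\in-C$ by the same two-case analysis, and then extend to all of $K$ via the C-convexity of $y\mapsto F(x_0,y,x_0)$ along the segment from $z_0$ to $y$. The only (harmless) difference is cosmetic: you make explicit that the coefficient of $y$ in the convex combination is strictly positive, which the paper leaves implicit in its choice of $\l\in[0,1]$.
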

\begin{proof} $K_0$ is compact, hence, according to Theorem \ref{t11} there exists $x_0\in K_0$ such that $F(x_0,y,x_0)\not\in-\inte C,\,\forall y\in K_0.$ We show, that $F(x_0,y,x_0)\not\in-\inte C,\,\forall y\in K.$
First we show, that there exists $z_0\in \mbox{core}_K K_0$ such that $F(x_0,z_0,x_0)\in -C.$ Indeed, if $x_0\in \mbox{core}_K K_0$ then let $z_0=x_0$ and the conclusion follows from (iii). Assume now, that $x_0\in K_0\setminus\mbox{core}_K K_0. $ Then, according to (iv),  there exists $z_0\in \mbox{core}_K K_0$ such that $F(x_0,z_0,x_0)\in -C.$

Let $y\in K.$ Then, since $z_0\in \mbox{core}_K K_0$, there exists $\l\in[0,1]$ such that $\l z_0 +(1-\l)y\in K_0,$ consequently $F(x_0,\l z_0 +(1-\l)y,x_0)\not\in-\inte C.$ From (ii) we have
$$\l F(x_0,z_0,x_0)+(1-\l)F(x_0,y,x_0)-F(x_0,\l z_0 +(1-\l)y,x_0)\in C$$
or, equivalently
$$(1-\l)F(x_0,y,x_0)-F(x_0,\l z_0 +(1-\l)y,x_0)\in C-\l F(x_0,z_0,x_0)\subseteq C.$$
Assume that $F(x_0,y,x_0)\in-\inte C.$ Then, $$-F(x_0,\l z_0 +(1-\l)y,x_0)\in -(1-\l)F(x_0,y,x_0)+ C\subseteq \inte C,$$ in other words
$$F(x_0,\l z_0 +(1-\l)y,x_0)\in-\inte C,$$
contradiction. Hence,
$F(x_0,y,x_0)\not\in-\inte C,$ for all $y\in K.$
\end{proof}

\begin{remark}\rm According to Theorem \ref{t3.0}, under the extra assumption that $F$ is weakly C-pseudomonotone with respect to the third variable Theorem \ref{t11} and Theorem \ref{t110} provide the solution existence of (\ref{p2}).
\end{remark}

Using the same technique as in the proof of Theorem \ref{t11}, based on Fan's Lemma, on can easily obtain solution existence of (\ref{p2}). However, note that depending by the structure of the trifunction $F$, the conditions may significantly differ to those assumed in the hypothesis of Theorem \ref{t11} or Theorem \ref{t110}. %, therefore, for the convenience of the reader, we give a full proof.
In what follows we state a result  concerning the closedness of the set $G(y)=\{x\in K: F(x,y,y)\not\in-\inte C\}.$

\begin{lemma}\label{dualclosed} Let $X$ and $Z$ be  Hausdorff  topological vector spaces,  let $C\subseteq Z$ be a convex and pointed cone with nonempty interior and let $K$ be a nonempty, convex and closed subset of $X$. Let $y\in K$ and consider the mapping $F :K \times K\times K \longrightarrow{Z}.$ Assume that one of the following conditions hold.
\begin{itemize}
\item[(a)] For every  net $(x_\a)\subseteq K,\, \lim x_\a=x$ one has: $F(x_\a,y,y)\not\in-\inte C\implies F(x,y,y)\not\in-\inte C.$
\item[(b)] The mapping $x\longrightarrow F(x,y,y)$ is C-upper semicontinuous on $K.$
\item[(c)] For every $x\in K$ and for every  net $(x_\a)\subseteq K,\, \lim x_\a=x,$  there exists a net $z_\a\subseteq Z,\, \lim z_a=z,$ such that $F(x_\a,y,y)-z_\a\in -C$ and $F(x,y,y)-z\in C.$
\end{itemize}
Then, the set $G(y)=\{x\in K: F(x,y,y)\not\in-\inte C\}$ is closed.
\end{lemma}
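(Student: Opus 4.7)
The plan is to mimic the proof of Lemma \ref{primclosed} line for line, since the only structural change is that the mapping of interest is now $x\mapsto F(x,y,y)$ instead of $x\mapsto F(x,y,x)$, and none of the arguments used there depended on the precise form of the third argument. Since $K$ is closed, it suffices to show that the set of limit points of $G(y)$ lies in $G(y)$, so I will fix a net $(x_\a)\subseteq G(y)$ with $\lim x_\a=x_0$ and derive, under each of the three hypotheses, that $F(x_0,y,y)\not\in -\inte C$.

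Case $(a)$ is essentially the statement to be proved, restated in terms of convergent nets: it asserts exactly that $F(x_\a,y,y)\not\in -\inte C$ for each $\a$ forces $F(x_0,y,y)\not\in -\inte C$, so there is nothing to prove. For case $(b)$, I would argue by contradiction: if $F(x_0,y,y)\in -\inte C$, then I apply the equivalent characterization of C-upper semicontinuity (recalled in the Preliminaries after \cite{Ta}) to the element $k:=-F(x_0,y,y)\in\inte C$. This produces a neighbourhood $U$ of $x_0$ with $F(x,y,y)\in F(x_0,y,y)+k-\inte C=-\inte C$ on $U\cap K$. Eventual membership $x_\a\in U$ then contradicts $(x_\a)\subseteq G(y)$.

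For case $(c)$, I again argue by contradiction. Assume $F(x_0,y,y)\in -\inte C$ and pick the net $(z_\a)\subseteq Z$, $\lim z_\a=z$, provided by the hypothesis, so that $F(x_\a,y,y)-z_\a\in -C$ and $F(x_0,y,y)-z\in C$. The latter rewrites as $z=F(x_0,y,y)-\bigl(F(x_0,y,y)-z\bigr)\in -\inte C - C = -\inte C$, using $\inte C + C = \inte C$. Since $-\inte C$ is open and $z_\a\to z$, we have $z_\a\in -\inte C$ for $\a\ge\a_0$. Combined with $F(x_\a,y,y)\in z_\a - C$ and $-\inte C - C\subseteq -\inte C$ once more, we conclude $F(x_\a,y,y)\in -\inte C$ eventually, contradicting $x_\a\in G(y)$.

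There is no genuine obstacle: the argument is the same as in Lemma \ref{primclosed}, and the remark there showing that $(b)$ implies $(c)$ transfers without modification by replacing $F(\cdot,y,\cdot)$ with $F(\cdot,y,y)$. The only point that warrants mild care is keeping the direction of the cone inclusions straight in $(c)$ when moving from $F(x_0,y,y)-z\in C$ to $z\in -\inte C$, and when absorbing the extra $-C$ term via $\inte C+C=\inte C$.
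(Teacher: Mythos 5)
Your proof is correct and takes exactly the route the paper intends: the paper omits the proof of Lemma \ref{dualclosed}, stating only that it is similar to that of Lemma \ref{primclosed}, and your case-by-case argument reproduces that earlier proof with the third argument $x$ replaced by $y$, with all the cone inclusions in cases (b) and (c) handled correctly.
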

The proof is similar to the proof of Lemma \ref{primclosed} therefore we omit it. Our coercivity condition concerning the problem (\ref{p2}) is the following:

 There exists a nonempty compact convex subset $K_0$ of $K$ such that for every $x\in K_0\setminus\mbox{core}_K K_0$ there exists an $y_0\in \mbox{core}_K K_0$ such that $F(x,y_0,y_0)\in -C.$ As we have mentioned before, it is an easy exercise to provide solution existence of (\ref{p2}) under similar conditions to those in the hypotheses of Theorem \ref{t11}  and Theorem \ref{t110}.

 However, by using Theorem \ref{t3.0} we obtain the following existence result concerning solution existence of (\ref{p1}).

\begin{theorem}\label{t112} Let $X$ and $Z$ be  Hausdorff  topological vector spaces,  let $C\subseteq Z$ be a convex and pointed cone with nonempty interior and let $K$ be a nonempty, convex subset of $X$. Consider the mapping $F :K \times K\times K \longrightarrow{Z}$  satisfying

\begin{itemize}
\item[(i)]$\forall y\in K,$ one of the conditions (a), (b), (c) in Lemma \ref{dualclosed} is satisfied,

\item[(ii)] $\forall x\in K,$ the mapping $y\longrightarrow F(x,y,y)$ is C-convex,

\item[(iii)] $\forall x\in K,\, F(x,x,x)\in -C\setminus -\inte C$ and $F(x,x,y)\in -C$ for all $x,y\in K,\,x\neq y$,

\item[(iv)] There exists a nonempty compact convex subset $K_0$ of $K$ with the property that for every $x\in K_0\setminus\mbox{core}_K K_0,$ there exists an $y_0\in \mbox{core}_K K_0$ such that $F(x,y_0,y_0)\in -C.$

 \item[(v)] $F$  is weakly explicitly C-quasiconvex with respect to the second variable,

 \item[(vi)] $F$ is weakly C-hemicontinuous with respect to the third variable.
\end{itemize}

Then, there exists an element $x_0\in K$ such that $F(x_0,y,x_0)\not\in -\inte{C}$ for all $y\in K.$
\end{theorem}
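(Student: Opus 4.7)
The plan is to obtain the conclusion in two steps: first show that the dual problem (\ref{p2}) has a solution by invoking Fan's Lemma, and then promote that solution to a solution of (\ref{p1}) using Theorem \ref{t3.0}(ii). Conditions (i)--(iv) are tailored to the dual problem, while (iii), (v), (vi) are exactly what is needed for the transfer.

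For the KKM step I would introduce $G : K \toto K$ by $G(y) = \{x \in K : F(x,y,y) \notin -\inte C\}$. Assumption (i) together with Lemma \ref{dualclosed} shows each $G(y)$ is closed in $K$, and (iii) (which gives $F(y,y,y) \in -C \setminus -\inte C$) ensures $y \in G(y)$, so $G(y) \neq \emptyset$. To verify $G$ is KKM I argue by contradiction: if $\bar y = \sum_{i=1}^n \lambda_i y_i$ with $\lambda_i \ge 0$, $\sum_{i=1}^n \lambda_i = 1$, fails to belong to $\bigcup_i G(y_i)$, then $F(\bar y, y_i, y_i) \in -\inte C$ for every $i$; convexity of $-\inte C$ combined with the $C$-convexity (ii) of $y \mapsto F(\bar y, y, y)$ and the identity $\inte C + C = \inte C$ forces $F(\bar y, \bar y, \bar y) \in -\inte C$, contradicting (iii). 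Fan's Lemma applied to $G$ restricted to the compact convex set $K_0$ then yields some $x_0 \in K_0$ with $F(x_0, y, y) \notin -\inte C$ for all $y \in K_0$. To pass from $K_0$ to $K$ I mimic the proof of Theorem \ref{t110}: use (iii) if $x_0 \in \core_K K_0$, otherwise (iv), to find $z_0 \in \core_K K_0$ with $F(x_0, z_0, z_0) \in -C$; for arbitrary $y \in K$ the definition of $\core_K K_0$ yields $\lambda \in [0,1)$ with $\lambda z_0 + (1-\lambda) y \in K_0$, and the $C$-convexity of $y \mapsto F(x_0, y, y)$ together with $C + \inte C = \inte C$ rules out $F(x_0, y, y) \in -\inte C$. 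Thus $x_0$ solves (\ref{p2}).

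The closing step is then immediate: conditions (iii), (v) and (vi) are precisely the hypotheses of Theorem \ref{t3.0}(ii), so any solution of (\ref{p2}) is also a solution of (\ref{p1}), and $x_0$ does the job. The most delicate point is the KKM verification, which hinges on the interplay of (ii), (iii) and the absorption $\inte C + C = \inte C$; once the dual existence is secured, the coercive extension and the Minty-type transfer are essentially bookkeeping that reproduces arguments already developed in Theorems \ref{t110} and \ref{t3.0}.
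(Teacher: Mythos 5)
Your proposal is correct and follows essentially the same route as the paper: the paper's proof simply states that (i)--(iv) yield a solution of the dual problem (\ref{p2}) by the argument of Theorem \ref{t110} (Fan's Lemma on $G(y)=\{x\in K: F(x,y,y)\not\in-\inte C\}$ over $K_0$, then the core-based coercive extension), after which (iii), (v), (vi) transfer that solution to (\ref{p1}) via Theorem \ref{t3.0}(ii). Your write-up just makes explicit the details the paper leaves implicit.
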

 \begin{proof} Similarly to the proof of Theorem \ref{t110} one can prove that (i)-(iv) assure the nonemptyness of the solution set of (\ref{p2}). On the other hand, (iii), (v) and (vi) via Theorem \ref{t3.0} assure the nonemptyness of the solution set of (\ref{p1}).
  \end{proof}

\section{The case of reflexive Banach spaces}

Note that Condition (iv) in the hypotheses of Theorem \ref{t11}, Theorem \ref{t110} and Theorem \ref{t112}  is usually  hard to be verified.  However, it is well known that in a reflexive Banach space $X$, the closed ball with radius $r>0$, $\overline{B}_r:=\{x\in X:\|x\|\le r\},$ is weakly compact. Therefore, if we endow the reflexive Banach space $X$ with the weak topology, we can take $K_0=\overline{B}_r\cap K$, hence, condition (iv) in Theorem \ref{t11} becomes :
there exists $r>0\mbox{ and }y_0\in K,$ such that for all $x\in K\mbox{ satisfying }\|x\|>r$ one has that $F(x,y_0,x)\in-\inte C.$

Furthermore, in this setting condition (iv) in the hypothesis of Theorem \ref{t11} can be weakened by assuming that  there exists $r>0,$ such that for all $x\in K$ satisfying %
$\|x\|>r$, there exists some $y_0\in K,$ (which may depend by $x$), with $\|y_0\|<\|x\|$ and for which the  condition
$F(x,y_0,x)\in- C$ holds. More precisely, we have the following result.

%Similar conditions can be provided for (iv) in the hypothesis of Theorem \ref{t111}.

\begin{theorem}
\label{t12} Let $X$ be a reflexive Banach space and let $Z$ be  a Hausdorff topological vector space.  Let $C\subseteq Z$ be a convex and pointed cone with nonempty interior, and let $K$ be a nonempty, convex and closed subset of $X$. Consider the mapping $F :K \times K\times K \longrightarrow{Z}$  satisfying

\begin{itemize}
\item[(i)] $\forall y\in K,$ one of the conditions (a), (b), (c) in Lemma \ref{primclosed} is satisfied, with respect to  the weak topology of $X$,

\item[(ii)] $\forall x\in K,$ the mapping $y\longrightarrow F(x,y,x)$ is C-convex on $K,$

\item[(iii)] $\forall x\in K,\, F(x,x,x)\in -C\setminus-\inte C,$

\item[(iv)] $\exists r>0$ such that, for all $x\in K$, $\|x\|>r$, there exists $y_0\in K$ with $\|y_0\|<\|x\|,$ such that
$F(x,y_0,x)\in-C.$
\end{itemize}

Then, there exists an element $x_0\in K,$ such that $F(x_0,y,x_0)\not\in -\inte{C}$ for all $y\in K.$
\end{theorem}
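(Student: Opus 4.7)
The plan is to reduce to a bounded subproblem via reflexivity, apply Theorem~\ref{t11} on a weakly compact ball-section of $K$, and then extend the solution so obtained to the entire $K$ using the C-convexity in (ii) together with the coercivity assumption (iv).

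First I would endow $X$ with the weak topology. Fix any $r'>r$ large enough that $K_{r'}:=K\cap\overline{B}_{r'}$ is nonempty. Since $K$ is convex and norm-closed it is weakly closed, and $\overline{B}_{r'}$ is weakly compact by reflexivity of $X$; hence $K_{r'}$ is a convex, weakly compact subset of $X$. Now apply Theorem~\ref{t11} on $K_{r'}$ in place of $K$: assumption (i) relative to the weak topology furnishes (i) of Theorem~\ref{t11}; (ii) is inherited on the convex subset $K_{r'}$; $F(x,x,x)\in -C\setminus-\inte C$ implies $F(x,x,x)\not\in-\inte C$, giving (iii) there; and the coercivity condition (iv) of Theorem~\ref{t11} holds vacuously with $K_0:=K_{r'}$. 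This yields $x_0\in K_{r'}$ with $F(x_0,y,x_0)\not\in-\inte C$ for every $y\in K_{r'}$.

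Next I would upgrade this to arbitrary $y\in K$. The key is to secure a \emph{handle} $y_0\in K$ with $\|y_0\|<r'$ and $F(x_0,y_0,x_0)\in-C$. If $\|x_0\|<r'$, take $y_0:=x_0$, so that (iii) delivers $F(x_0,y_0,x_0)\in-C$. If $\|x_0\|=r'$ then $\|x_0\|>r$, and assumption (iv) supplies $y_0\in K$ with $\|y_0\|<\|x_0\|=r'$ and $F(x_0,y_0,x_0)\in-C$. Now fix any $y\in K$ and set $z_t:=(1-t)y_0+ty$. The estimate $\|z_t\|\le(1-t)\|y_0\|+t\|y\|$ with $\|y_0\|<r'$ shows $z_t\in K_{r'}$ for every sufficiently small $t\in(0,1)$, whence $F(x_0,z_t,x_0)\not\in-\inte C$. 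Suppose, for contradiction, that $F(x_0,y,x_0)\in-\inte C$; C-convexity (ii) then gives
\[
(1-t)F(x_0,y_0,x_0)+tF(x_0,y,x_0)-F(x_0,z_t,x_0)\in C,
\]
so that $F(x_0,z_t,x_0)\in(-C)+(-\inte C)-C\subseteq -\inte C$, by the identity $\inte C+C=\inte C$. This contradicts $F(x_0,z_t,x_0)\not\in-\inte C$, so $F(x_0,y,x_0)\not\in-\inte C$ for every $y\in K$.

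The main obstacle is the boundary case $\|x_0\|=r'$: a direct interpolation between $x_0$ and a distant $y\in K$ may immediately leave $K_{r'}$, so a naive C-convexity argument anchored at $x_0$ itself fails. Assumption (iv) is precisely the device that remedies this, producing a strictly interior anchor $y_0$ around which the segment to any $y\in K$ initially stays inside $K_{r'}$.
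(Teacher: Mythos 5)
Your proposal is correct and follows essentially the same route as the paper's proof: truncate $K$ to the weakly compact set $K\cap\overline{B}_{r_1}$ with $r_1>r$, invoke Theorem~\ref{t11} there, obtain an anchor $z_0$ of norm strictly less than $r_1$ via (iii) or (iv) according to whether $x_0$ lies on the boundary sphere, and propagate the conclusion to all of $K$ by the C-convexity argument along the segment from the anchor. The only differences are cosmetic (your explicit norm estimate guaranteeing $z_t\in K_{r'}$ for small $t$, which the paper leaves implicit).
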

\begin{proof} Let $r>0$ such that (iv) holds, and let $r_1>r.$ Consider $K_0=K\cap \overline{B}_{r_1}.$ Obviously, $K_0$ is weakly compact, hence, according to Theorem \ref{t11} there exists $x_0\in K_0$ such that $F(x_0,y,x_0)\not\in-\inte C$ for all $y\in K_0.$ We show, that $F(x_0,y,x_0)\not\in-\inte C$ for all $y\in K.$
First we show, that there exists $z_0\in K_0,\,\|z_0\|<r_1$ such that $F(x_0,z_0,x_0)\in-C.$ Indeed, if $\|x_0\|<r_1$ then let $z_0=x_0$ and the conclusion follows from (iii). Assume now, that $\|x_0\|=r_1>r.$ Then, according to (iv),  there exists $z_0\in K,\,\|z_0\|<\|x_0\|=r_1$ such that $F(x_0,z_0,x_0)\in-C.$ %On the other hand $z_0\in K_0,$ hence $F(x_0,z_0,x_0)\not\in-\inte C,$ which leads to $F(x_0,z_0,x_0)=0.$

Let $y\in K.$ Then, there exists $\l\in]0,1[$ such that $\l z_0 +(1-\l)y\in K_0,$ consequently $F(x_0,\l z_0 +(1-\l)y,x_0)\not\in-\inte C.$ From (ii) we have
$$\l F(x_0,z_0,x_0)+(1-\l)F(x_0,y,x_0)-F(x_0,\l z_0 +(1-\l)y,x_0)\in C$$
or, equivalently
$$(1-\l)F(x_0,y,x_0)-F(x_0,\l z_0 +(1-\l)y,x_0)\in C.$$
Assume that $F(x_0,y,x_0)\in-\inte C.$ Then, $$-F(x_0,\l z_0 +(1-\l)y,x_0)\in -(1-\l)F(x_0,y,x_0)+ C\subseteq \inte C,$$ in other words
$$F(x_0,\l z_0 +(1-\l)y,x_0)\in-\inte C,$$
contradiction. Hence,
$F(x_0,y,x_0)\not\in-\inte C$ for all $y\in K.$
\end{proof}
\begin{comment}
Concerning the dual problem we have  the following result, with a similar proof.
\begin{theorem} \label{t122} Let $X$ be a reflexive Banach space and let $Z$ be  a Hausdorff topological vector space.  Let $C\subseteq Z$ be a convex and pointed cone with nonempty interior, and let $K$ be a nonempty, convex and closed subset of $X$. Consider the mapping $F :K \times K\times K \longrightarrow{Z}$  satisfying

\begin{itemize}
\item[(i)] $\forall y\in K,$ the mapping $x\longrightarrow F(x,y,y)$ is
C-upper semicontinuous on $K,$ with respect to  the weak topology of $X$,

\item[(ii)] $\forall x\in K,$ the mapping $y\longrightarrow F(x,y,y)$ is C-convex on $K,$

\item[(iii)] $\forall x\in K,\, F(x,x,x)\in-C\setminus-\inte C,$

\item[(iv)] $\exists r>0$ such that, for all $x\in K$, $\|x\|>r$, there exists $y_0\in K$ with $\|y_0\|<\|x\|,$ such that
$F(x,y_0,y_0)\in-C.$
\end{itemize}

Then, there exists an element $x_0\in K$ such that
\begin{equation*}
F(x_0,y,y)\not\in -\inte{C},\,\forall y\in K.
\end{equation*}
\end{theorem}
\end{comment}

\begin{remark}\rm  In what follows we provide another coercivity condition (iv) which ensures the solution existence in a reflexive Banach space context. More precisely, we assume that there exists $r>0,$ such that, for all $x\in K$ satisfying $ \|x\|\le r,$ there exists $y_0\in K$ with $\|y_0\|<r,$  and $F(x,y_0,x)\in-C.$ Note that the diagonal condition (iii) is more general than the one assumed in Theorem \ref{t12}.
\end{remark}

The following result holds.
\begin{theorem}
\label{t13} Let $X$ be a reflexive Banach space and let $Z$ be  a Hausdorff topological vector space.  Let $C\subseteq Z$ be a convex and pointed cone with nonempty interior, and let $K$ be a nonempty, convex and closed subset of $X$. Consider the mapping $F :K \times K \times K\longrightarrow{Z}$  satisfying

\begin{itemize}
\item[(i)] $\forall y\in K,$ one of the conditions (a), (b), (c) in Lemma \ref{primclosed} is satisfied with respect to the weak topology of $X$,

\item[(ii)] $\forall x\in K,$ the mapping $y\longrightarrow F(x,y,x)$ is C-convex,

\item[(iii)] $\forall x\in K,\, F(x,x,x)\not\in-\inte C,$

\item[(iv)] $\exists r>0$ such that, for all $x\in K$, $\|x\|\le r$, there exists $y_0\in K$ with $\|y_0\|<r,$ and
$F(x,y_0,x)\in-C.$
\end{itemize}

Then, there exists an element $x_0\in K,$ such that $F(x_0,y,x_0)\not\in -\inte{C}$ for all $y\in K.$
\end{theorem}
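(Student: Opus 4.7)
The strategy parallels the proof of Theorem \ref{t12}: I would apply Theorem \ref{t11} to a weakly compact truncation of $K$ to obtain a candidate $x_0$ at which the equilibrium inequality holds against all ``small'' test points, and then use the coercivity condition (iv) together with the $C$-convexity (ii) to propagate the inequality from the truncation to all of $K$. The structural difference from Theorem \ref{t12} is that the diagonal hypothesis (iii) has been weakened from $F(x,x,x)\in -C\setminus -\inte C$ to merely $F(x,x,x)\not\in-\inte C$, so one can no longer take the descent point to be $x_0$ itself in the interior of the ball; this is compensated by the strengthened form of (iv), which directly produces a small $z_0$ for \emph{every} $x_0\in K\cap\overline{B}_r$.

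Concretely, I would equip $X$ with its weak topology and set $K_0:=K\cap\overline{B}_r$ where $r>0$ is as in (iv). By reflexivity $\overline{B}_r$ is weakly compact, and since $K$ is norm-closed and convex it is weakly closed, so $K_0$ is weakly closed, convex, weakly compact, and (implicit in (iv) being non-vacuous) nonempty. Restricted to $K_0$, $F$ satisfies the hypotheses of Theorem \ref{t11}: (i)-(iii) are inherited directly, and condition (iv) of Theorem \ref{t11} is vacuous once the compact set there is taken to be $K_0$ itself. Theorem \ref{t11} then produces $x_0\in K_0$ with $F(x_0,y,x_0)\not\in-\inte C$ for every $y\in K_0$.

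To extend to arbitrary $y\in K$, note that $\|x_0\|\le r$, so by (iv) there exists $z_0\in K$ with $\|z_0\|<r$ (hence $z_0\in K_0$) and $F(x_0,z_0,x_0)\in -C$. For any $y\in K$, pick $\l\in]0,1[$ close enough to $1$ so that $\|\l z_0+(1-\l)y\|\le\l\|z_0\|+(1-\l)\|y\|\le r$, which is possible since $\|z_0\|<r$; then $w_\l:=\l z_0+(1-\l)y\in K_0$, so $F(x_0,w_\l,x_0)\not\in-\inte C$. The $C$-convexity (ii) gives
$$\l F(x_0,z_0,x_0)+(1-\l)F(x_0,y,x_0)-F(x_0,w_\l,x_0)\in C,$$
and since $-\l F(x_0,z_0,x_0)\in C$, this reduces to $(1-\l)F(x_0,y,x_0)-F(x_0,w_\l,x_0)\in C$. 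If $F(x_0,y,x_0)\in-\inte C$, then $(1-\l)F(x_0,y,x_0)\in-\inte C$ (as $1-\l>0$), and $\inte C+C=\inte C$ would force $F(x_0,w_\l,x_0)\in-\inte C$, a contradiction. The delicate point is securing the descent element $z_0\in K_0$ for every $x_0\in K_0$ (not just those of maximal norm) --- this is exactly the content of the new hypothesis (iv), and it is what allows the weaker diagonal condition (iii) to suffice.
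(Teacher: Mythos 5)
Your proposal is correct and follows essentially the same route as the paper's proof: truncate to $K_0=K\cap\overline{B}_r$, invoke Theorem \ref{t11} on the weakly compact set, use (iv) to produce $z_0\in K_0$ with $F(x_0,z_0,x_0)\in-C$, and propagate via $C$-convexity and $\inte C+C=\inte C$. Your closing observation --- that the strengthened (iv) supplies a descent point for every $x_0$ in the ball, compensating for the weakened diagonal condition (iii) --- is exactly the point of this variant.
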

\begin{proof}  Let $r>0$ such that (iv) holds, and consider $K_0=K\cap \overline{B}_{r}.$ Obviously, $K_0$ is weakly compact, hence, according to Theorem \ref{t11} there exists $x_0\in K_0$ such that $F(x_0,y,x_0)\not\in-\inte C$ for all $y\in K_0.$ We show, that $F(x_0,y,x_0)\not\in-\inte C$ for all $y\in K.$
According to (iv) there exists $z_0\in K$ with $\|z_0\|<r,$ such that $F(x_0,z_0,x_0)\in-C.$ On the other hand, $z_0\in K_0,$ hence  $F(x_0,z_0,x_0)\not\in-\inte C.$ Consequently $F(x_0,z_0,x_0)\in-C\setminus-\inte C.$
Let $y\in K\setminus K_0.$ Then,  there exists $\l\in]0,1[$ such that $\l z_0 +(1-\l)y\in  K_0,$ consequently $F(x_0,\l z_0 +(1-\l)y,x_0)\not\in-\inte C.$ From (ii) we have
$$\l F(x_0,z_0,x_0)+(1-\l)F(x_0,y,x_0)-F(x_0,\l z_0 +(1-\l)y,x_0)\in C$$
or, equivalently
$$(1-\l)F(x_0,y,x_0)-F(x_0,\l z_0 +(1-\l)y,x_0)\in C.$$
Assume that $F(x_0,y,x_0)\in-\inte C.$ Then, $$-F(x_0,\l z_0 +(1-\l)y,x_0)\in -(1-\l)F(x_0,y,x_0)+ C\subseteq \inte C,$$ or, in other words
$$F(x_0,\l z_0 +(1-\l)y,x_0)\in-\inte C,$$
contradiction. Hence, $F(x_0,y,x_0)\not\in-\inte C$ for all $y\in K.$
\end{proof}
\begin{comment}
For completeness we state a similar result concerning the solution existence of the dual problem \ref{p2}. The proof is analogous to the proof of Theorem \ref{t13} therefore we omit it.

\begin{theorem}
\label{t133} Let $X$ be a reflexive Banach space and let $Z$ be  a Hausdorff topological vector space.  Let $C\subseteq Z$ be a convex and pointed cone with nonempty interior, and let $K$ be a nonempty, convex and closed subset of $X$. Consider the mapping $F :K \times K \times K\longrightarrow{Z}$  satisfying

\begin{itemize}
\item[(i)] $\forall y\in K,$ the mapping $x\longrightarrow F(x,y,y)$ is
C-upper semicontinuous on $K,$ in the weak topology of $X$,

\item[(ii)] $\forall x\in K,$ the mapping $y\longrightarrow F(x,y,y)$ is C-convex,

\item[(iii)] $\forall x\in K,\, F(x,x,x)\not\in-\inte C,$

\item[(iv)] $\exists r>0$ such that, for all $x\in K$, $\|x\|\le r$, there exists $y_0\in K$ with $\|y_0\|<r,$ and
$F(x,y_0,y_0)\in-C.$
\end{itemize}

Then, there exists an element $x_0\in K$ such that
\begin{equation*}
F(x_0,y,y)\not\in -\inte{C},\,\forall y\in K.
\end{equation*}
\end{theorem}
\end{comment}
In what follows, we reformulate Theorem \ref{t110}  for a reflexive Banach space setting. Note that we need to assume the closedness of $K$ in order to obtain the weak compactness of the intersection of a closed ball with $K.$ The condition (iv) becomes slightly weaker than in Theorem \ref{t13}, however we need a stronger diagonal condition (iii). Taking into account that for $r>0,$ $K_0=B_r\cap K=\{x\in K:\|x\|\le r\},$ $\mbox{core}_K K_0=\{x\in K:\|x\|<r\}$ and $K\setminus \mbox{core}_K K_0=\{x\in K:\|x\|=r\}$, the new condition (iv) becomes: there exists $r>0$ such that for all $x\in K$, $\|x\|= r$, there exists $y_0\in K$ with $\|y_0\|<r$ and
$F(x,y_0,x)\in-C.$
\begin{comment}
and

$\exists r>0$ such that, for all $x\in K$, $\|x\|= r$, there exists $y_0\in K$ with $\|y_0\|<r,$ and
$F(x,y_0,y_0)\in-C,$ respectively.
\end{comment}

\begin{theorem}
\label{t134} Let $X$ be a reflexive Banach space and let $Z$ be  a Hausdorff topological vector space.  Let $C\subseteq Z$ be a convex and pointed cone with nonempty interior, and let $K$ be a nonempty, convex and closed subset of $X$. Consider the mapping $F :K \times K \times K\longrightarrow{Z}$  satisfying

\begin{itemize}
\item[(i)] $\forall y\in K,$ one of the conditions (a), (b), (c) in Lemma \ref{primclosed} is satisfied with respect to the weak topology of $X,$

\item[(ii)] $\forall x\in K,$ the mapping $y\longrightarrow F(x,y,x)$ is C-convex,

\item[(iii)] $\forall x\in K,\, F(x,x,x)\in -C\setminus -\inte C,$

\item[(iv)] $\exists r>0$ such that for all $x\in K$, $\|x\|= r$, there exists $y_0\in K$ with $\|y_0\|<r$ and
$F(x,y_0,x)\in-C.$
\end{itemize}

Then, there exists an element $x_0\in K,$ such that $F(x_0,y,x_0)\not\in -\inte{C}$ for all $y\in K.$
\end{theorem}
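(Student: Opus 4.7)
I would adapt the argument of Theorem \ref{t13} to fit the weaker coercivity (iv) which now acts only on the boundary sphere $\{x\in K:\|x\|=r\}$, compensated by the strengthened diagonal condition (iii) $F(x,x,x)\in -C\setminus -\inte C$. The three broad moves are: restrict to a weakly compact subset and apply Theorem \ref{t11}; locate an auxiliary point $z_0$ with $\|z_0\|<r$ and $F(x_0,z_0,x_0)\in -C$; and extend the non-membership conclusion from the ball to all of $K$ using the C-convexity of $y\mapsto F(x_0,y,x_0)$.

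Concretely, set $K_0:=K\cap\overline{B}_r$. Since $X$ is reflexive, $\overline{B}_r$ is weakly compact, and as $K$ is norm-closed and convex (hence weakly closed), $K_0$ is weakly compact and convex. Equipping $X$ with its weak topology and applying Theorem \ref{t11} with $K_0$ in place of $K$ yields some $x_0\in K_0$ with $F(x_0,y,x_0)\not\in-\inte C$ for all $y\in K_0$: hypotheses (i)--(iii) of Theorem \ref{t11} pass verbatim (note that $F(x,x,x)\in -C\setminus -\inte C$ implies $F(x,x,x)\not\in-\inte C$), while the coercivity (iv) of Theorem \ref{t11} is trivial because $K_0$ is itself compact. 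To produce $z_0$, I split on the norm of $x_0$: if $\|x_0\|<r$, take $z_0:=x_0$ and use (iii) to get $F(x_0,z_0,x_0)\in -C$; if $\|x_0\|=r$, invoke (iv) directly. In either case $z_0\in K_0$, so Step one also delivers $F(x_0,z_0,x_0)\not\in-\inte C$. Finally, for $y\in K\setminus K_0$ (so $\|y\|>r$), choose $\l\in\,]0,1[$ close enough to $1$ that $v_\l:=\l z_0+(1-\l)y\in K_0$, which is possible since $\|v_\l\|\le \l\|z_0\|+(1-\l)\|y\|<r$ for $\l$ near $1$. Then $F(x_0,v_\l,x_0)\not\in-\inte C$, and (ii) combined with $-\l F(x_0,z_0,x_0)\in C$ gives $(1-\l)F(x_0,y,x_0)-F(x_0,v_\l,x_0)\in C$; the standard $\inte C+C=\inte C$ argument from the proofs of Theorems \ref{t110} and \ref{t13} then rules out $F(x_0,y,x_0)\in -\inte C$.

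The main subtlety lies in the case split used to produce $z_0$. The strengthening of (iii) from merely $F(x,x,x)\not\in-\inte C$ (as in Theorem \ref{t13}) to $F(x,x,x)\in-C\setminus -\inte C$ is precisely what allows the choice $z_0=x_0$ when $\|x_0\|<r$, since (iv) is now silent on interior points; without that strengthening the interior case would collapse. Apart from this bookkeeping, the C-convexity extension argument is routine and entirely parallel to Theorems \ref{t110} and \ref{t13}.
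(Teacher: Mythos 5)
Your proposal is correct and follows essentially the same route as the paper: the paper obtains this theorem by specializing Theorem \ref{t110} to $K_0=K\cap\overline{B}_r$ in the weak topology, where $\mbox{core}_K K_0$ becomes $\{x\in K:\|x\|<r\}$, and the proof of Theorem \ref{t110} is exactly your three steps (apply Theorem \ref{t11} on the weakly compact set, produce $z_0$ by the case split on whether $x_0$ lies in the relative core, then extend by C-convexity). Your explicit norm estimate for choosing $\l$ and your observation that the strengthened diagonal condition (iii) is what handles the interior case match the paper's reasoning.
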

\begin{comment}
The existence results for the dual problem becomes.

\begin{theorem}
\label{t135} Let $X$ be a reflexive Banach space and let $Z$ be  a Hausdorff topological vector space.  Let $C\subseteq Z$ be a convex and pointed cone with nonempty interior, and let $K$ be a nonempty, convex and closed subset of $X$. Consider the mapping $F :K \times K \times K\longrightarrow{Z}$  satisfying

\begin{itemize}
\item[(i)] $\forall y\in K,$ the mapping $x\longrightarrow F(x,y,y)$ is C-upper semicontinuous,

\item[(ii)] $\forall x\in K,$ the mapping $y\longrightarrow F(x,y,y)$ is C-convex,

\item[(iii)] $\forall x\in K,\, F(x,x,x)\in -C\setminus -\inte C,$

\item[(iv)] $\exists r>0$ such that, for all $x\in K$, $\|x\|= r$, there exists $y_0\in K$ with $\|y_0\|<r,$ and
$F(x,y_0,y_0)\in-C.$
\end{itemize}

Then, there exists an element $x_0\in K$ such that
\begin{equation*}
F(x_0,y,y)\not\in -\inte{C},\,\forall y\in K.
\end{equation*}
\end{theorem}
\end{comment}
  \begin{remark}\rm One can easily obtain solution existence of (\ref{p2}) under the extra condition that $F$ is weakly C-pseudomonotone with respect to the third variable assumed in the hypotheses of Theorem \ref{t12}, Theorem \ref{t13} and Theorem \ref{t134}. Moreover, it is an easy exercise to reformulate Theorem \ref{t112} in the reflexive Banach space setting in order to obtain solution existence of (\ref{p1}) from the nonemptyness of the solution set of (\ref{p2}).
  \end{remark}

\section{On the perturbed weak vector equilibrium problems}

In this section we obtain solution existence of a perturbed weak vector equilibrium problem. Let $X$ and $Z$ be  Hausdorff topological vector spaces and  $K$ be a nonempty, convex and closed subset of $X$. We consider further  $C\subseteq Z$  a convex and pointed cone with nonempty interior.

Let $f:K\times K\To Z$ be a bifunction and assume that $f$ is diagonal null, that is $f(x,x)=0$ for all $x\in K.$ Consider the weak vector equilibrium problem, which consists in finding $x_0\in K$ such that
\begin{equation}\label{B}
f(x_0,y)\not\in -\inte C,\,\forall y\in K.
\end{equation}
Let $g:K\times K\To Z$ be another bifunction, We associate to (\ref{B}) the following perturbed vector equilibrium problem. Find $x_0\in K$ such that \begin{equation}\label{Per}
f(x_0,y)+g(x_0,y)\not\in-\inte C,\,\forall y\in K.
\end{equation}

As it was emphasized before (\ref{Per}) can be considered as a particular case of the primal problem (\ref{p1}) with the trifunction $F_1:K\times K\times K\to Z,\,F_1(x,y,z)=f(z,y)+g(x,y).$ Note that in this case the dual of (\ref{Per}) is the following problem.
Find $x_0\in K$ such that
\begin{equation}\label{g}
g(x_0,y)\not\in -\inte C,\,\forall y\in K.
\end{equation}

On the other hand, (\ref{Per}) can be considered as a particular instance of the dual problem (\ref{p2}) with the trifunction $F_2:K\times K\times K\to Z,\,F_2(x,y,z)=f(x,z)+g(x,y).$ In this case the primal problem is given by (\ref{g}).%to find $x_0\in K$ such that $g(x_0,y)\not\in -\inte C,\,\forall y\in K.$

 Hence, by using the results from the previous sections one can easily obtain solution existence for (\ref{Per}). For instance, it is an easy exercise that the $C-$convexity of the mappings $y\To f(x,y)$ and $y\To g(x,y)$ for every $x\in K$ assure the $C-$convexity of the mapping $y\To F_1(x,y,x)$ for every $x\in K$ and the $C-$convexity of the mapping $y\To F_2(x,y,y)$ for every $x\in K$, respectively. We will use condition (c) of Lemma \ref{primclosed}, since this assumption is new in the literature and, as it was shown in Example \ref{ex3.2}, it is also weaker than C-upper semicontinuity. An easy consequence of Theorem \ref{t11} is the following result.

\begin{theorem}
\label{t51} Let $X$ and $Z$ be  Hausdorff topological vector spaces,  let $C\subseteq Z$ be a convex and pointed cone with nonempty interior, and let $K$ be a nonempty, convex and closed subset of $X$. Consider the mappings $f,g :K \times K\longrightarrow{Z}$  satisfying
\begin{itemize}
\item[(i)] $\forall y\in K,$ it holds that for every  $x\in K$ and for every  net $(x_\a)\subseteq K,\, \lim x_\a=x$ there exists a net $z_\a\subseteq Z,\, \lim z_\a=z$ such that $f(x_\a,y)+g(x_\a,y)-z_\a\in -C$ and $f(x,y)+g(x,y)-z\in C,$

\item[(ii)] $\forall x\in K,$ the mappings $y\longrightarrow f(x,y)$ and $y\longrightarrow g(x,y)$ are C-convex,

\item[(iii)] $\forall x\in K,\, f(x,x)=0\mbox{ and }g(x,x)\not\in-\inte  C,$

\item[(iv)] There exists $K_0\subseteq X$ a nonempty and compact set and $y_0\in K,$ such that $f(x,y_0)+g(x,y_0)\in-\inte C,$ for all $x\in K\setminus K_0.$
\end{itemize}

Then, there exists an element $x_0\in K$ such that $f(x_0,y)+g(x_0,y)\not\in -\inte{C},$ for all $y\in K.$
\end{theorem}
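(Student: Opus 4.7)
The plan is to recognize Theorem \ref{t51} as a direct specialization of Theorem \ref{t11} to the trifunction $F_1(x,y,z) = f(z,y) + g(x,y)$, which is precisely the perspective already suggested by the author before the statement. So the strategy is to define $F_1$ this way and then verify, one by one, that hypotheses (i)--(iv) of Theorem \ref{t11} are implied by hypotheses (i)--(iv) of Theorem \ref{t51}. Since the conclusion of Theorem \ref{t11} is that there exists $x_0 \in K$ with $F_1(x_0,y,x_0) = f(x_0,y) + g(x_0,y) \notin -\inte C$ for all $y \in K$, this will immediately give the desired conclusion.

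First I would compute $F_1(x,y,x) = f(x,y) + g(x,y)$ and observe that hypothesis (i) of Theorem \ref{t51} is literally condition (c) of Lemma \ref{primclosed} applied to $F_1$, so hypothesis (i) of Theorem \ref{t11} is satisfied. Next, for the convexity condition (ii), I would use that the C-convexity of $y \mapsto f(x,y)$ and of $y \mapsto g(x,y)$ (hypothesis (ii) of Theorem \ref{t51}) implies the C-convexity of their sum $y \mapsto F_1(x,y,x)$; this uses only the fact that $C + C \subseteq C$, which holds since $C$ is a convex cone. The third point is the diagonal condition: $F_1(x,x,x) = f(x,x) + g(x,x) = 0 + g(x,x) = g(x,x) \notin -\inte C$ by hypothesis (iii) of Theorem \ref{t51}, giving (iii) of Theorem \ref{t11}. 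Finally, the coercivity condition (iv) of Theorem \ref{t11} reads $F_1(x,y_0,x) = f(x,y_0) + g(x,y_0) \in -\inte C$ for all $x \in K \setminus K_0$, which is exactly (iv) of Theorem \ref{t51}.

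Once all four hypotheses of Theorem \ref{t11} are seen to hold for $F_1$, an application of that theorem produces $x_0 \in K$ with $f(x_0,y) + g(x_0,y) \notin -\inte C$ for all $y \in K$, which is the claim. There is essentially no obstacle here: the only mildly substantive step is the verification that a sum of two C-convex bifunctions is C-convex (a direct consequence of $C + C \subseteq C$), and the only subtle bookkeeping is the fact that the ``third variable'' $z$ of the trifunction $F_1$ does not actually appear in the diagonal evaluation $F_1(x,y,x)$, so no additional structural hypothesis on $f$ is needed beyond what is already listed.
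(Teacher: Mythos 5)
Your proposal is correct and coincides with the paper's own proof, which likewise obtains the result by applying Theorem \ref{t11} to the trifunction $F_1(x,y,z)=f(z,y)+g(x,y)$; you merely spell out the routine verification of hypotheses (i)--(iv) that the paper leaves implicit.
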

\begin{proof} The conclusion follows by  Theorem \ref{t11} by taking $F_1(x,y,z)=f(z,y)+g(x,y)$ in its hypothesis.
\end{proof}
\begin{remark}\rm Note that condition (i) in Theorem \ref{t51} is satisfied if we assume separately for the bifunctions $f$ and $g$ the following: for all $y\in K,$ it holds that for every  $x\in K$ and for every  net $(x_\a)\subseteq K,\, \lim x_\a=x$ there exist the nets $z_\a^1,\,z_\a^2\subseteq Z,\, \lim z_\a^1=z^1,\,\lim z_\a^2=z^2$ such that $f(x_\a,y)-z_\a^1\in -C,\,g(x_\a,y)-z_\a^2\in -C$ and $f(x,y)-z^1\in C,\,g(x,y)-z^2\in C.$
\end{remark}

\begin{remark}\rm Solution existence of (\ref{Per})  also follows via Theorem \ref{t110}, if we replace the conditions (iii) and (iv) in the hypothesis of Theorem \ref{t51} by the following.

(iii') $\forall x\in K,\, f(x,x)=0\mbox{ and }g(x,x)\in-C\setminus-\inte  C,$

(iv') There exists a nonempty compact convex subset $K_0$ of $K$ with the property that for every $x\in K_0\setminus\mbox{core}_K K_0,$ there exists an $y_0\in \mbox{core}_K K_0$ such that $f(x,y_0)+g(x,y_0)\in -C.$

Moreover, in this case we can drop the assumption that $K$ is closed.
\end{remark}

Next we obtain solution existence of the perturbed problem (\ref{Per}) via duality. Note that in this case the conditions can be  assumed not for all $x\in K,$ but relative to the solution of (\ref{g}).

\begin{theorem}\label{t5.2}   Let $X$ and $Z$ be  Hausdorff  topological vector spaces,  let $C\subseteq Z$ be a convex and pointed cone with nonempty interior and let $K$ be a nonempty convex subset of $X$. Consider the mappings $f,g :K \times K \longrightarrow{Z}$. Let $x_0$ be a  solution of the problem (\ref{g}), i.e. $g(x_0,y)\not\in-\inte C$ for all $y\in K.$ Assume that the following statements hold.
\begin{itemize}
\item[(i)] For all $y\in K$ and $t\in]0,1[$ one has that $g(x_0,(1-t)x_0+ty)-f((1-t)x_0+ty,y)-g(x_0,y)\in-\inte C$,
 \item[(ii)]For every $y\in K$ the following implication holds. If $f((1-t)x_0+ty,y)+g(x_0,y)\not\in-\inte C$ for all $t\in]0,1],$ then $f(x_0,y)+g(x_0,y)\not\in-\inte C.$
\item[(iii)]  For all $x\in K,\,f(x,x)=0.$
     \end{itemize}
 Then, $x_0$ is a solution of (\ref{Per}), that is, $f(x_0,y)+g(x_0,y)\not\in-\inte C$ for all $y\in K.$
\end{theorem}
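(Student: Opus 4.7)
The plan is to fix an arbitrary $y\in K$ and show that $f(x_0,y)+g(x_0,y)\not\in -\inte C$. By hypothesis (ii), it suffices to verify the intermediate statement
\[
f((1-t)x_0+ty,y)+g(x_0,y)\not\in -\inte C \quad\text{for every } t\in \,]0,1],
\]
so the work is to establish this family of non-membership statements and then invoke the hemicontinuity-type assumption (ii) to pass to $t=0$.

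First I would dispatch the boundary case $t=1$: the expression reduces to $f(y,y)+g(x_0,y)=g(x_0,y)$ by the diagonal condition (iii), and this is not in $-\inte C$ because $x_0$ solves (\ref{g}). For $t\in\,]0,1[$, set $y_t:=(1-t)x_0+ty$, which lies in $K$ by convexity. Assume for contradiction that $f(y_t,y)+g(x_0,y)\in -\inte C$. Rewriting assumption (i) gives
\[
f(y_t,y)+g(x_0,y)-g(x_0,y_t)\in \inte C,
\]
so $g(x_0,y_t)\in \big(f(y_t,y)+g(x_0,y)\big)-\inte C\subseteq -\inte C-\inte C$. Using $\inte C\subseteq C$ together with the identity $\inte C+C=\inte C$ recalled in Section~2, we obtain $-\inte C-\inte C\subseteq -\inte C$, hence $g(x_0,y_t)\in -\inte C$. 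This contradicts the fact that $x_0$ is a solution of (\ref{g}), since $y_t\in K$. Thus $f(y_t,y)+g(x_0,y)\not\in -\inte C$ for every $t\in\,]0,1[$, and combined with the $t=1$ case we have the needed family of relations for all $t\in\,]0,1]$.

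Applying hypothesis (ii) yields $f(x_0,y)+g(x_0,y)\not\in -\inte C$, and since $y\in K$ was arbitrary, $x_0$ solves (\ref{Per}).

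No step looks genuinely delicate; the only real pitfall is bookkeeping the signs in condition (i), which is already stated so that the target expression $f(y_t,y)+g(x_0,y)$ appears with a consistent sign and can be compared directly to the (\ref{g})-type quantity $g(x_0,y_t)$. The main conceptual point is simply that (i) plays the role of a C-convexity/hemicontinuity surrogate that lets the solution of the auxiliary problem (\ref{g}) be propagated to a solution of the perturbed problem (\ref{Per}) along the segment $[x_0,y]$.
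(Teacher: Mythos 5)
Your proof is correct and follows essentially the same route as the paper's: settle $t=1$ by the diagonal condition (iii) and the fact that $x_0$ solves (\ref{g}), use condition (i) together with $\inte C + C = \inte C$ to rule out membership in $-\inte C$ for $t\in\,]0,1[$ (the paper derives the marginally stronger conclusion $\not\in -C$ there, which it does not need), and then conclude via (ii).
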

\begin{proof}
Let $y\in K.$ Since $x_0$ is a solution of (\ref{g}) one has, that $g(x_0,(1-t)x_0+ty)\not\in-\inte C$ for all $t\in[0,1]$. Hence, by using the fact that $C+\inte C=\inte C$, from (i) we have that
$f((1-t)x_0+ty,y)+g(x_0,y)\not\in-C,$ for all $t\in]0,1[$ and $y\in K.$ On the other hand, $f(y,y)+g(x_0,y)=g(x_0,y)\not\in-\inte C$, hence
$$f((1-t)x_0+ty,y)+g(x_0,y)\not\in-\inte C,\mbox{ for all }t\in]0,1].$$
From (ii) we obtain that  $f(x_0,y)+g(x_0,y)\not\in-\inte C.$ Since $y\in K$ was arbitrary chosen the conclusion follows.
\end{proof}

In what follows we obtain solution existence of (\ref{Per}) by assuming different conditions for the bifunctions $f$ and $g.$  We need the following notion.

\begin{definition}% A bifunction $f: K\times K\To Z$ is said to be C-essentially quasimonotone relative to the first variable, iff
%for all $y_1,y_2,...,y_n\in K$ and all $\l_1,\l_2,...,\l_n\ge 0$ with $\sum_{i=1}^n\l_i=1$  one has
%$$\sum_{i=1}^n \l_i f\left(y_i,\sum_{i=1}^n \l_i y_i\right)\not\in-\inte C.$$
A bifunction $f: K\times K\To Z$ is said to be C-essentially quasimonotone relative to the second variable, iff
for all $y_1,y_2,...,y_n\in K$ and all $\l_1,\l_2,...,\l_n\ge 0$ with $\sum_{i=1}^n\l_i=1$  one has
$$\sum_{i=1}^n \l_i f\left(\sum_{i=1}^n \l_i y_i,y_i\right)\not\in-\inte C.$$
\end{definition}
\begin{lemma}\label{difrence} Let $X$ and $Z$ be  Hausdorff  topological vector spaces,  let $C\subseteq Z$ be a convex and pointed cone with nonempty interior and let $K$ be a nonempty, convex subset of $X$. Consider the mapping $F_1:K \times K\times K \longrightarrow{Z},\, F_1(x,y,z)=f(z,y)+g(x,y)$ and assume that the bifunctions $f,g:K\times K\To Z$ satisfy

\begin{itemize}
\item[(i)] $f$ is C-essentially quasimonotone relative to the second variable,

\item[(ii)] $y\To g(x,y)$ is C-convex for all $x\in K$ and $g(x,x)\in C$ for all $x\in K.$
\end{itemize}
Then, the map $G:K\rightrightarrows K,\,G(y)=\{x\in K: F_1(x,y,x)\not\in-\inte C\}$ is a KKM application.
\end{lemma}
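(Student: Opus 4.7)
The plan is to prove the KKM property by contradiction, combining the $C$-essential quasimonotonicity of $f$ with the $C$-convexity/diagonal condition on $g$ to derive a contradiction with hypothesis (i).

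Suppose toward a contradiction that $G$ is not a KKM map. Then I would extract points $y_1,\ldots,y_n\in K$ and scalars $\l_1,\ldots,\l_n\ge 0$ with $\sum_{i=1}^n\l_i=1$ such that $y:=\sum_{i=1}^n \l_i y_i$ lies in none of the $G(y_i)$. Setting $F_1(y,y_i,y)=f(y,y_i)+g(y,y_i)$, this reads
$$f(y,y_i)+g(y,y_i)\in -\inte C\quad\text{for every }i\in\{1,\ldots,n\}.$$
Since $-\inte C$ is convex, taking the convex combination with weights $\l_i$ yields
$$\sum_{i=1}^n \l_i f(y,y_i)+\sum_{i=1}^n \l_i g(y,y_i)\in -\inte C. \qquad (\star)$$

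Next I would isolate the $g$-term. By the $C$-convexity of $y\mapsto g(y,\cdot)$ applied at $\sum \l_i y_i=y$, together with the assumption $g(y,y)\in C$, I obtain
$$\sum_{i=1}^n \l_i g(y,y_i)\in g(y,y)+C\subseteq C+C=C.$$
Subtracting this from $(\star)$ and using $-\inte C - C=-(\inte C+C)=-\inte C$, I conclude
$$\sum_{i=1}^n \l_i f(y,y_i)=\sum_{i=1}^n \l_i f\!\left(\sum_{j=1}^n\l_j y_j,y_i\right)\in -\inte C,$$
which directly contradicts hypothesis (i), the $C$-essential quasimonotonicity of $f$ relative to the second variable. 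Hence $G$ must be a KKM map.

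The only subtlety to watch is bookkeeping with the cone arithmetic, in particular the identities $\inte C+C=\inte C$ and the convexity of $-\inte C$ used to pass from pointwise membership to the convex combination; no compactness, continuity or topological hypothesis is invoked, which is consistent with the statement being purely algebraic/convex-analytic.
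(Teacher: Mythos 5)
Your proof is correct and follows essentially the same route as the paper's: the paper first isolates the claim $\sum_i \l_i F_1\bigl(\sum_j \l_j y_j, y_i, \sum_j \l_j y_j\bigr)\not\in-\inte C$ as an intermediate step and then deduces the KKM property, whereas you merge the two stages, but the ingredients (convexity of $-\inte C$, the identity $\inte C+C=\inte C$, and the $C$-convexity of $g$ in its second variable combined with $g(x,x)\in C$) and their use are identical. The only blemish is notational: the convexity you invoke is that of the second argument $w\mapsto g(y,w)$ with the first argument frozen at $y=\sum_i\l_i y_i$, not of "$y\mapsto g(y,\cdot)$" as written, but the computation you perform is the correct one.
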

\begin{proof}
We show at first that  for all $y_1,y_2,...,y_n\in K$ and $\l_1,\l_2,...,\l_n\ge 0$ with $\sum_{i=1}^n \l_i=1,\,n\ge 1$ one has
$$\sum_{i=1}^n \l_i F_1\left(\sum_{i=1}^n \l_i y_i,y_i,\sum_{i=1}^n \l_i y_i\right)\not\in-\inte C.$$%\mbox{ and }\sum_{i=1}^n \l_i F_2\left(\sum_{i=1}^n \l_i y_i,y_i, y_i\right)\not\in-\inte C.$$
Assume the contrary, that is, there exist $y_1,y_2,...,y_n\in K$ and  there exist $\l_1,\l_2,\ldots,\l_n\ge 0,$ with $\sum_{i=1}^n\l_i=1$ such that
$$\sum_{i=1}^n \l_i F_1\left(\sum_{i=1}^n\l_i y_i,y_i,\sum_{i=1}^n\l_i y_i\right)\in-\inte C.$$%\mbox{ or} \sum_{i=1}^n \l_i F_2\left(\sum_{i=1}^n \l_i y_i,y_i, y_i\right)\in-\inte C.$$
This assumption is equivalent to
$$\sum_{i=1}^n \l_i \left(f\left(\sum_{i=1}^n\l_i y_i,y_i\right)+g\left(\sum_{i=1}^n\l_i y_i,y_i\right)\right)\in-\inte C.$$

From assumption (i)  we have that $\sum_{i=1}^n \l_i f\left(\sum_{i=1}^n\l_i y_i,y_i\right)\not\in-\inte C.$ But then, since $\inte C+C=\inte C$, we have
$$\sum_{i=1}^n \l_i g\left(\sum_{i=1}^n\l_i y_i,y_i\right)\not\in C.$$
Now using the fact that $y\To g\left(\sum_{i=1}^n\l_i y_i,y\right)$ is C-convex  and $g(x,x)\in C$ for all $x\in K$ we obtain
$$\sum_{i=1}^n \l_i g\left(\sum_{i=1}^n\l_i y_i,y_i\right)-g\left(\sum_{i=1}^n\l_i y_i,\sum_{i=1}^n\l_i y_i\right)\in C,$$
contradiction.

Assume that $G:K\rightrightarrows K,\,G(y)=\{x\in K: F(x,y,x)\not\in-\inte C\}$ is not a KKM application. Then there exist $y_1,y_2,...,y_n\in K$ and $y\in \co\{y_1,y_2,...,y_n\}$ such that $y\not\in\cup_{i=1}^n G(y_i).$ 

In other words,
there exist $\l_1,\l_2,...,\l_n\ge 0$ with $\sum_{i=1}^n \l_i=1$ such that $y=\sum_{i=1}^n \l_i y_i\not\in G(y_i)$ for all $i\in\{1,2,...,n\},$ that is
$$F\left(\sum_{i=1}^n \l_i y_i,y_i,\sum_{i=1}^n \l_i y_i\right)\in-\inte C,\, \forall i\in\{1,2,...,n\}.$$
But then, since $-\inte C$ is convex one has
$$\sum_{i=1}^n \l_i F\left(\sum_{i=1}^n \l_i y_i,y_i,\sum_{i=1}^n \l_i y_i\right)\in-\inte C,$$
which contradicts the fact that $$\sum_{i=1}^n \l_i F_1\left(\sum_{i=1}^n \l_i y_i,y_i,\sum_{i=1}^n \l_i y_i\right)\not\in-\inte C.$$
\end{proof}

An easy consequence is the following theorem.

\begin{theorem}\label{t5.3} Let $X$ and $Z$ be  Hausdorff  topological vector spaces,  let $C\subseteq Z$ be a convex and pointed cone with nonempty interior and let $K$ be a nonempty convex  subset of $X$. Assume that the bifunctions $f,g:K\times K\To Z$ satisfy

\begin{itemize}
\item[(i)] There exists a nonempty compact convex subset $K_0$ of $K$ with the property that for every $x\in K_0\setminus\mbox{core}_K K_0,$ there exists an $y_0\in \mbox{core}_K K_0$ such that $f(x,y_0)+g(x,y_0)\in -C.$

 \item[(ii)] $\forall y\in K_0,$ it holds that for every  $x\in K_0$ and for every  net $(x_\a)\subseteq K_0,\, \lim x_\a=x$ there exists a net $z_\a\subseteq Z,\, \lim z_\a=z$ such that $f(x_\a,y)+g(x_\a,y)-z_\a\in -C$ and $f(x,y)+g(x,y)-z\in C,$

\item[(iii)] $f$ is C-essentially quasimonotone relative to the second variable on $K_0$, that is for all $y_1,y_2,...,y_n\in K_0$ and all $\l_1,\l_2,...,\l_n\ge 0$ with $\sum_{i=1}^n\l_i=1$  one has
$$\sum_{i=1}^n \l_i f\left(\sum_{i=1}^n \l_i y_i,y_i\right)\not\in-\inte C.$$% and $x\To  f(x,y)$ is C-upper semicontinuous for all $y\in K,$

\item[(iv)] $y\To g(x,y)$ and $y\To f(x,y)$ are C-convex on $K$ for all $x\in K_0,$% $g(x,x)\in C$ for all $x\in K$ and and $x\To  g(x,y)$ is C-upper semicontinuous for all $y\in K.$

\item[(v)] $\forall x\in K_0,\, f(x,x)\in-C\setminus-\inte C\mbox{ and }g(x,x)=0$.%-C\setminus-\inte  C,$

\end{itemize}
Then, there exists $x_0\in K,$ such that $f(x_0,y)+g(x_0,y)\not\in-\inte C$ for all $y\in K.$
\end{theorem}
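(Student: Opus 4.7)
The plan is to replicate the two-step strategy of Theorem~\ref{t110}. Setting $F_1(x,y,z)=f(z,y)+g(x,y)$, so that $F_1(x,y,x)=f(x,y)+g(x,y)$, I would first use Ky Fan's Lemma on the compact convex set $K_0$ to produce some $x_0\in K_0$ satisfying $f(x_0,y)+g(x_0,y)\not\in-\inte C$ for every $y\in K_0$, and then propagate this partial solution to the whole of $K$ using the coercivity condition~(i) together with the $C$-convexity hypothesis~(iv).

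To set this up, define $G:K_0\toto K_0$ by $G(y)=\{x\in K_0:f(x,y)+g(x,y)\not\in-\inte C\}$. Condition~(ii) is exactly hypothesis~(c) of Lemma~\ref{primclosed} applied to $x\mapsto F_1(x,y,x)$ on the convex compact set $K_0$, so each $G(y)$ is closed in $K_0$ and therefore compact. Since $F_1(y,y,y)=f(y,y)\in-C\setminus-\inte C$ by~(v), we also have $y\in G(y)$, so $G(y)\neq\emptyset$. The KKM property of $G$ then follows from Lemma~\ref{difrence} applied with ambient convex set $K_0$ in place of $K$: its hypotheses reduce to~(iii) ($f$ is $C$-essentially quasimonotone on $K_0$) together with $y\mapsto g(x,y)$ being $C$-convex and $g(x,x)=0\in C$, both supplied by~(iv) and~(v). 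Ky Fan's Lemma~\ref{fan} then yields some $x_0\in\bigcap_{y\in K_0}G(y)$.

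For the extension I would select a bridge element $z_0\in\core_K K_0$ with $f(x_0,z_0)+g(x_0,z_0)\in-C$: take $z_0=x_0$ when $x_0\in\core_K K_0$ (using $f(x_0,x_0)\in-C$ and $g(x_0,x_0)=0$ from~(v)), otherwise invoke~(i). Given an arbitrary $y\in K$, the definition of $\core_K K_0$ furnishes $t\in(0,1]$ with $w:=(1-t)z_0+ty\in K_0$, hence $f(x_0,w)+g(x_0,w)\not\in-\inte C$. Writing $\lambda=1-t\in[0,1)$, summing the two $C$-convexity relations for $y\mapsto f(x_0,y)$ and $y\mapsto g(x_0,y)$ and absorbing the $-C$ term $\lambda\bigl(f(x_0,z_0)+g(x_0,z_0)\bigr)$ yields
\[
(1-\lambda)\bigl(f(x_0,y)+g(x_0,y)\bigr)-\bigl(f(x_0,w)+g(x_0,w)\bigr)\in C.
\]
If $f(x_0,y)+g(x_0,y)\in-\inte C$, then since $1-\lambda>0$ the identity $\inte C+C=\inte C$ forces $f(x_0,w)+g(x_0,w)\in-\inte C$, contradicting the choice of $w$.

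The main obstacle I anticipate is the KKM step: one must verify that the proof of Lemma~\ref{difrence} depends only on the convexity of the ambient set on which its hypotheses are posited, so that the substitution $K\leadsto K_0$ is legitimate despite our conditions~(iii) and~(v) being assumed only on $K_0$. Once this is secured, the remainder is a direct transcription of the arguments already used for Theorems~\ref{t11} and~\ref{t110}.
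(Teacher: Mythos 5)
Your proposal is correct and follows essentially the same route as the paper's proof: apply Lemma \ref{difrence} on the compact convex set $K_0$ to get the KKM property, use condition (ii) (i.e.\ condition (c) of Lemma \ref{primclosed}) for closedness, invoke Fan's Lemma to solve the problem on $K_0$, and then propagate to all of $K$ via the bridge point $z_0\in\core_K K_0$ and $C$-convexity. Your explicit remark that $t\in(0,1]$ gives $1-\lambda>0$ is in fact slightly more careful than the paper's wording at that step.
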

\begin{proof} Consider the mapping $F:K_0 \times K_0\times K_0 \longrightarrow{Z},\, F(x,y,z)=f(z,y)+g(x,y)$. Lemma \ref{difrence} assures that $$G:K_0\toto K_0,\,\, G(y)=\{x\in K_0:F(x,y,x)\not\in-\inte C\}$$ is a KKM mapping. On the other hand, (i) assures that $G(y)$ is closed for every $y\in K_0.$ Since $K_0$ is compact we have that $G(y)$ is compact for every $y\in K_0,$ hence according to Lemma \ref{fan}, $\cap_{y\in K_0}G(y)\neq\emptyset.$ In other words, there exists $x_0\in K_0$ such that $f(x_0,y)+g(x_0,y)\not\in-\inte C$ for all $y\in K_0.$

We show that the latter relation holds for every $y\in K.$ First we show, that there exists $z_0\in \mbox{core}_K K_0$ such that $f(x_0,z_0)+g(x_0,z_0)\in -C.$ Indeed, if $x_0\in \mbox{core}_K K_0$ then let $z_0=x_0$ and the conclusion follows from (v). Assume now, that $x_0\in K_0\setminus\mbox{core}_K K_0. $ Then, according to (i),  there exists $z_0\in \mbox{core}_K K_0$ such that $f(x_0,z_0)+g(x_0,z_0)\in -C.$

Let $y\in K.$ Then, since $z_0\in \mbox{core}_K K_0$, there exists $\l\in[0,1]$ such that $\l z_0 +(1-\l)y\in K_0,$ consequently $f(x_0,\l z_0 +(1-\l)y)+g(x_0,\l z_0 +(1-\l)y)\not\in-\inte C.$ From (iv) we have
$$\l(f(x_0, z_0)+g(x_0, z_0))+(1-\l)(f(x_0,y)+g(x_0,y))-$$
$$(f(x_0,\l z_0 +(1-\l)y)+g(x_0,\l z_0 +(1-\l)y))\in C$$
or, equivalently
$$(1-\l)(f(x_0,y)+g(x_0,y))-(f(x_0,\l z_0 +(1-\l)y)+g(x_0,\l z_0 +(1-\l)y))\in$$
$$ C-\l (f(x_0, z_0)+g(x_0, z_0)\subseteq C.$$
Assume that $f(x_0,y)+g(x_0,y)\in-\inte C.$ Then, $$-(f(x_0,\l z_0 +(1-\l)y)+g(x_0,\l z_0 +(1-\l)y))\in$$
 $$-(1-\l)(f(x_0,y)+g(x_0,y))+ C\subseteq \inte C,$$ in other words
$$f(x_0,\l z_0 +(1-\l)y)+g(x_0,\l z_0 +(1-\l)y)\in-\inte C,$$
contradiction. Hence,
$f(x_0,y)+g(x_0,y)\not\in-\inte C,$ for all $y\in K.$
\end{proof}
\begin{remark}\rm If $K$ is also compact, then one can take $K_0=K$, thus, one can drop the assumption (i)  and the assumption that the map $y\To f(x,y)$ is C-convex for every $x\in K$ in the hypothesis of Theorem \ref{t5.3}. Moreover, the assumptions imposed on the bifunctions $f$ and $g$ can be permuted, which might become useful in order to chose the right perturbation bifunction,  when we perturb a concrete problem.
\end{remark}


\begin{thebibliography}{99}
\bibitem{A}  Q.H. Ansari, Vector equilibrium problems and vector variational inequalities. In:  Giannessi, F. (ed.) Vector variational inequalities and vector equilibria,  pp. 1-15. Kluwer Academic Publishers, Dordrecht, 2000.
\bibitem{AKY}  Q.H. Ansari, I.V. Konnov, J.C.  Yao,  Existence of a solution and variational principles for vector equilibrium problems, J. Optim. Theory Appl. {110} (2001) 481-492.
\bibitem{AKY1}  Q.H. Ansari, I.V. Konnov, J.C.  Yao, Characterizations of solutions for vector equilibrium problems, J. Optim. Theory Appl. {113} (2002) 435-447.
\bibitem{AOS} Q.H. Ansari, W. Oettli, D. Schl\"ager, A generalization of vectorial equilibria, Mathematical Methods of Operations Research {46} (1997) 147-152.
\bibitem{ACY}  Q.H. Ansari, W.K. Chan, X.Q. Yang,  Weighted Quasi-Variational Inequalities and Constrained Nash Equilibrium Problems,
Taiwanese Journal of Mathematics {10} (2006) 361-380.
\bibitem{AK} Q.H. Ansari, Z. Khan,  Densely Relative Pseudomonotone Variational Inequalities over Product of Sets,
Journal of Nonlinear and Convex Analysis {7} (2006) 179-188.

\bibitem{BKP}  M. Bianchi, G. Kassay, R. Pini, Existence of equilibria via Ekeland’s principle, J. Math. Anal. Appl. 305 (2005) 502-512.
\bibitem{BKP1} M. Bianchi, G. Kassay, R. Pini, Ekeland’s principle for vector equilibrium problems, Nonlinear Anal. 66 (2007) 1459-1464.
\bibitem{BO}  E. Blum, W. Oettli, From optimization and variational inequalities to equilibrium problems, Math. Stud. {63} (1994) 123-145.

\bibitem{durea} M. Durea, On the existence and stability of approximate solutions of perturbed vector equilibrium problems, J. Math. Anal. Appl. 333 (2007) 1165-1179.

\bibitem{Fan1}  K. Fan, {A minimax inequality and its application}. In: O. Shisha (ed.) Inequalities, Vol. 3, pp. 103-113. Academic Press, New York, 1972.
\bibitem{Fan}  K. Fan,  {A generalization of Tychonoff's fixed point theorem,} Math.Ann. {142} (1961) 305-310.
\bibitem{Fe}  R. Ferrentino,  {Variational Inequalities and Optimization Problems,} Applied Mathematical Sciences 1   (2007) 2327-2343.
\bibitem{FQ} C. Finet, L. Quarta, Vector-valued perturbed equilibrium problems, J. Math. Anal. Appl. 343 (2008) 531-545.

\bibitem{G1}  F. Giannessi (ed.), Vector variational inequalities and vector equilibria, Mathematical theories, Kluwer Academic Publishers, Dordrecht, 2000.
%\bibitem{GGR}  I. Ginchev, A. Guerraggio, M. Rocca, Isolated minimizers and proper efficiency for $C^{0,1}$ constrained vector optimization problems, J. Math. Anal. Appl. {309} (2005) 353-368.
\bibitem{Go1} X. Gong,  Efficiency and Henig efficiency for vector equilibrium problems, J. Optim. Theory Appl. {108} (2001) 139-154.
\bibitem{G} X. Gong,  Strong vector equilibrium problems, Journal of  Global Optimization {36} (2006) 339-349.
\bibitem{GRTZ}  A.  G\"opfert, H.  Riahi, C.  Tammer, C. Z\u alinescu, Variational Methods in Partially Ordered Spaces. Springer, New York, 2003.

\bibitem{kazmi} K.R. Kazmi,  On vector equilibrium problem, Proc. Indian Acad. Sci. Math. Sci. 110 (2000) 213-223.
\bibitem{KI} J. Kolumb\'an, D. Inoan, Quasi-equilibrium problems with trifunctions, preprint

\bibitem{L1} {S. L\'aszl\'o}, {Existence of solutions of inverted variational inequalities}, Carpathian J. Math. 28 (2012)  271-278.
\bibitem{La1}  S. L\'aszl\'o, {Multivalued variational inequalities and coincidence point results}, J. Math Anal Appl. {404} (2013) 105-114.
\bibitem{La}  S. L\'aszl\'o, Vector  Equilibrium Problems on Dense Sets,  J. Optim. Theory Appl. {170} (2016) 437-457.
\bibitem{DTL}  D.T. Luc, {Existence Results for Densely Pseudomonotone Variational Inequalities}. J. Math Anal Appl. {254} (2001) 291-308.
%\bibitem{DTL1} D.T. Luc, Generalized convexity in vector optimization. In: Hadjisavas, N.,  Koml\'osi, S., Schaible, S. (eds.) Handbook of generalized convexity and generalized monotonicity, pp. 195-236. Springer, Series Nonconvex Optimization and Its Applications. Springer, New York (2005)

\bibitem{Mau-Rac}  A. Maugeri, F.  Raciti, {On Existence Theorems for Monotone and Nonmonotone Variational Inequalities,} Journal of Convex Analysis 16  (2009) 899-911.

\bibitem{Ta}  T. Tanaka, Generalized semicontinuity and existence theorems for cone saddle points, Appl. Math. Optim. {36} (1997) 313-322.
\end{thebibliography}
\end{document}